\pgfplotsset{compat=1.12}
\newtheorem{prop}{Proposition}
\begin{document}

\title{Dynamic Discretization Discovery for the Multi-Depot Vehicle Scheduling Problem with Trip Shifting}
\date{\today}
    
    \author[1,*]{R.N. van Lieshout}
	\author[2]{D.T.J. van der Schaft}

	\affil[1]{\footnotesize Department of Operations, Planning, Accounting, and Control, School of Industrial Engineering, Eindhoven University of Technology, Eindhoven, The Netherlands}
	\affil[2]{\footnotesize Integral Capacity Management, Erasmus Medical Center, Rotterdam, The Netherlands}
	\affil[*]{\footnotesize Corresponding author. Email: r.n.v.lieshout@tue.nl}

		\maketitle

\begin{abstract}
\normalsize
The solution of the Multi-Depot Vehicle Scheduling Problem (MDVSP) can often be improved substantially by incorporating Trip Shifting (TS) as a model feature. By allowing departure times to deviate a few minutes from the original timetable, new combinations of trips may be carried out by the same vehicle, thus leading to more efficient scheduling. However, explicit modeling of each potential trip shift quickly causes the problem to get prohibitively large for current solvers, such that researchers and practitioners were obligated to resort to heuristic methods to solve large instances. In this paper, we develop a Dynamic Discretization Discovery algorithm that guarantees an optimal continuous-time solution to the MDVSP-TS without explicit consideration of all trip shifts. It does so by iteratively solving and refining the problem on a partially time-expanded network until the solution can be converted to a feasible vehicle schedule on the fully time-expanded network. Computational results demonstrate that this algorithm outperforms the explicit modeling approach by a wide margin and is able to solve the MDVSP-TS even when many departure time deviations are considered.
\vspace{15pt}

\noindent \textbf{Keywords: } Dynamic Discretization Discovery, Iterative Refinement, Vehicle Scheduling, Trip Shifting, Time Windows, Time-Space Network. 
\end{abstract}

\pagenumbering{arabic}

\section{Introduction}
Public transportation companies aim to provide service to passengers in a certain region by allocating vehicles to take these passengers from one place to another. The field of vehicle scheduling deals with optimally combining a predefined set of timetables into a schedule, adhering to the availability of vehicles at the right moments in time at specified locations. When vehicles can be pulled out of multiple depots, the problem of minimizing operating costs whilst covering every trip is called the Multi-Depot Vehicle Scheduling Problem (MDVSP).

Although the provision of timetables beforehand heavily reduces the complexity of the problem, it limits the opportunities to improve the solution quality. Even slight alterations of the starting times of trips may allow connections to be made that were impossible at first, potentially enhancing the schedule and reducing operating costs. Allowing the timetables to deviate from its original values is referred to as trip shifting, transforming the problem into a Multi-Depot Vehicle Scheduling Problem with Trip Shifting (MDVSP-TS).

Ideally, we would like to consider all starting times within a certain margin from the timetable. Unfortunately, the more allowed deviations, the larger the set of possible alternatives becomes, up to a point where the mathematical model is prohibitively large for current solvers. In practice, heuristic methods are applied to large instances instead at the cost of guaranteed optimality.

A promising exact solution method that may alleviate the reliance on heuristic approaches for the MDVSP-TS is Dynamic Discretization Discovery (DDD; Boland et al., 2017). This recently developed technique operates on a partially time-expanded network, overcoming the need to explicitly model all possible trip shifts, yet yielding optimal continuous-time solutions. In an iterative fashion, the outcome of an IP-model based on a partial discretization is used to discover new time points and refine the network accordingly. \cite{boland2017continuous} apply the algorithm to the Service Network Design problem, but as the time-expanded network structure of this problem is prevalent in many transportation problems, the authors expect that many of the fundamental ideas underlying their approach are widely applicable.

Apart from the theoretical interest of developing a new solution approach for the MDVSP-TS, the computational study conducted by \cite{boland2017continuous} shows the potential value in practical sense as DDD outperforms other exact methods for solving the Service Network Design problem. Moreover, after its introduction, DDD has been investigated in the context of fundamental optimization problems such as the shortest path problem \citep{he2021dynamic} and the traveling salesman problem \citep{vu2020dynamic}, time after time proving to be highly effective. Therefore, the goal of this paper is to apply DDD to the MDVSP-TS and investigate its potential.


In this paper, we develop a DDD algorithm that is specifically designed for the MDVSP-TS. A partially time-expanded network is constructed that contains a path for every feasible sequence of trips by underestimating the length of arcs. Then, in iterative fashion, the problem is solved on the partial network and converted to a solution that adheres to the actual travel times of the arcs. If this solution is feasible, optimality has been reached. Otherwise, time points are added to the partial network that cut off the current solution, and the updated model is solved.

Within the DDD solution scheme, we propose three methods to construct the partially time-expanded network. The first method best resembles other DDD approaches, where all arcs are either the correct length, or too short. The second and third method take advantage of the structure of the MDVSP-TS to lengthen arcs without increasing the number of time points in the network, resulting in stronger relaxations. In addition, we propose three refinement strategies. The first refinement strategy transforms the solution on the partial network into duties, and then evaluates those duties one by one. The other refinement strategies exploit that since the partial network is relatively coarse, the decomposition of its solution into duties is typically non-unique. Hence, these strategies \textit{optimize} the duty decomposition, one targeted at keeping the network size as small as possible, and the other aimed at reducing the number of DDD iterations. The non-uniqueness of the duty decomposition can also be leveraged to alleviate the impact of trip shifting on the timetable in a post-processing step. 

To test the performance of DDD, we conduct a series of computational experiments on instances of 500, 750 and 1000 trips generated according to the data generation process introduced by \cite{carpaneto1989branch}. The results show that, provided that one uses the stronger network generation methods, DDD is able to find an optimal solution to the MDVSP-TS orders of magnitude faster than the explicit modeling technique that operates on a fully time-expanded network. This allows for a wider range of departure times to be considered for the trips, resulting in a significant reduction in objective value. Moreover, the results show that optimizing the duty decomposition as a post-processing step enables the elimination of unnecessary trip shifts, such that the impact on the timetable is limited.



The paper is organized as follows: Section \ref{s:literature} provides an overview of the existing research on the MDVSP-TS and DDD. Then, Section \ref{s:mdvsp} gives a description of the MDVSP-TS, and Section \ref{s:ddd} introduces the DDD algorithm that is developed for the MDVSP-TS. The computational results of the algorithm are presented in Section \ref{s:results}. Finally, conclusions on the performance of DDD are drawn in Section~\ref{s:conclusion}.
\section{Literature Review}
\label{s:literature}

\subsection{Multi-Depot Vehicle Scheduling and Timetabling}
\label{ss:lit_mdvsp}
The class of vehicle scheduling problems has been extensively researched for more than 50 years in Operations Research \citep{bunte2009overview}. In short, it revolves around finding a feasible assignment of timetabled trips to vehicles such that each trip is covered once and operating costs are minimized. The consideration of multiple depots where vehicles must start and finish their route through the network heavily increases the complexity of the problem. Whereas multiple polynomial time algorithms have been developed for the single depot case, the problem considering multiple depots is proven to be NP-hard \citep{bertossi1987some}. 

Modeling the multiple depot case is generally done in either of the following three ways: by means of a single-commodity, multi-commodity or set partitioning model. First of all, in the single-commodity representation of the problem, both trips and vehicles are represented as nodes. The goal is to construct a set of circuits containing one vehicle node and one or more trip nodes that minimizes the operating costs. \cite{carpaneto1989branch} were the first to develop a branch-and-bound algorithm that solves the MDVSP using such a single-commodity approach. \cite{mesquita1992multiple} improved upon the original representation of the problem by aggregating all vehicle nodes corresponding to the same depot to arrive at a more comprehensive network structure and a considerable reduction in the amount of variables and constraints. Secondly, the multi-commodity formulations are based on a layered network where each layer represents the network with regards to one of the depots. A distinction can be made between so-called connection-based networks and time-space networks. In the former, each possible connection between two compatible trips is explicitly modeled. Solution methods that employ the connection-based representation include a branch-and-bound algorithm by \cite{forbes1994exact} and two heuristic approaches by \cite{haghani2002heuristic} reducing the problem size by imposing fuel consumption constraints. In the time-space network representation, the number of arcs in the network connecting compatible trips can be heavily reduced by aggregation of deadheading opportunities (97\% to 99\% on real-world instances) without compromising the feasible solution space \citep{kliewer2006time}. Finally, set partitioning models aim to select a subset of feasible routes such that all trips are covered exactly once at minimal cost. The routes are chosen from a set of feasible routes that is either constructed a priori or gradually by means of column generation. The set partitioning approach was first introduced by \cite{ribeiro1994column} who use column generation to iteratively add reduced cost vehicle routes to the mathematical model.

The opportunity to alter departure times of scheduled trips in the MDVSP was first examined by \cite{mingozzi1995exact}, who employed a branch and bound algorithm to solve a set partitioning formulation of the MDVSP including time windows. \cite{desaulniers1998multi} used a multi-commodity network flow representation with a non-linear objective function to more realistically interpret waiting costs. Their column generation method was later adopted by \cite{hadjar2009dynamic} and improved through tactical reduction of the width of the time windows. \cite{kliewer2006timeB} were the first to consider trip shifting in the time-space network formulation of the MDVSP.

Extending the MDVSP with time windows or trip shifting for all trips quickly deems standard exact solution methods intractable for large-sized instances. Therefore, heuristic approaches have been developed to deal with alterations to the departure time. Examples include heuristics that construct a small set of trip shifts with high cost saving potential \citep{kliewer2006timeB} and a two-phase matheuristic that builds bus timetables in the first stage and constructs an optimal schedule given these timetables thereafter \citep{desfontaines2018multiple}. Despite the computational efficiency of such methods, the researchers of the latter underline the relevance of integrating both trip shifting and vehicle scheduling into an iterative algorithm.

\cite{petersen2013simultaneous} considered the quality of the timetable from a passenger's perspective while also scheduling the vehicles by partly evaluating solutions based on the waiting time between different lines in a large neighbourhood search algorithm. \cite{schmid2015integrated} employed a similar approach and decomposed the problem into a scheduling and a balancing component. They applied large neighbourhood search to the scheduling phase of the problem, whereas departure times are balanced through solving a linear program. The importance of timetable quality from the viewpoint of the passenger has been further investigated by \cite{laporte2017multi} and \cite{liu2017integrated} by taking passenger connections into account besides timetabling and vehicle scheduling. Both studies obtain a set of Pareto-efficient solutions through an $\epsilon$-constraint method and a deficit function-based sequential search method, respectively. \cite{fonseca2018matheuristic} developed a matheuristic that iteratively adds modified departure times to the integrated timetabling and vehicle scheduling problem by either one of four selection strategies, allowing a wider set of timetable adjustments to be considered.

A related stream of literature considers the integration of timetabling and vehicle scheduling in the context of periodic timetables and vehicle schedules (i.e., circulations). \cite{kroon2014flexible} developed an approach that counts the number of required vehicles within the standard periodic timetabling framework model. However, this required assuming that at most one vehicle is present at each station at a time. \cite{van2021integrated} relaxed this assumption and obtained the set of Pareto-efficient solutions with respect to the number of required vehicles and the average perceived travel time of passengers. Also in this context, simultaneous optimization of the timetable and vehicle schedule proved highly beneficial, as it allowed considerable reductions in the number of vehicles, at the expense of marginal increases in passenger travel time. 

\subsection{Dynamic Discretization Discovery}
\label{ss:lit_ddd}
The DDD algorithm by \cite{boland2017continuous} has originally been developed for the Continuous-Time Service Network Design problem, in which minimum-cost paths are constructed in both space and time for the shipments of goods and the resources necessary to execute these shipments. Because a full discretization of the problem (i.e. taking all possible dispatch times into consideration) quickly causes the network to grow too large to solve the problem in a reasonable amount of time, coarser discretizations are commonly used to ensure the computational tractability. \cite{boland2017continuous} were the first to prove that an optimal continuous-time solution can be found without full discretization, but through iterative refinement of a partially time-expanded network. In such a network, only those parts are fully discretized where optimality cannot be guaranteed based on a coarser degree of discretization.

Within the area of Service Network Design, \cite{scherr2020dynamic} extended the model to the Service Network Design problem with Mixed Autonomous Fleets by adding additional properties to the partially time-expanded network, allowing themselves to manage the flow of vehicles more precisely. Moreover, they improve upon the algorithm by strengthening lower bounds through the use of valid inequalities and by introducing heuristics methods to find strong upper bounds more easily. \cite{marshall2021interval} introduced an Interval-based DDD approach where the nodes in the time-expanded network represent a certain location within a time interval instead of at a specific time. The resulting solution structure is exploited in the refinement stage of the algorithm, leading to fewer iterations and smaller partially time-expanded networks. Consequently, larger instances can be solved and high-quality solution are found more quickly than in the original DDD.

The application of DDD has been extended to other types of transportation problems as well. \cite{hewitt2019enhanced} adapted and enhanced the algorithm for the Service Network Design problem to fit its less-than-truckload freight transportation analogue: the Continuous-Time Load Plan Design Problem. He proposed a two-phase algorithm that solves a linear relaxation in the first phase, after which a set of arc-time window inequalities enables the definition of feasible time windows for the transportation of commodities. \cite{vu2020dynamic} solved the Time Dependent Traveling Salesman Problem with Time Windows by means of DDD. They showed that it outperforms even the strongest algorithms in existing literature and that it is robust to all instance parameters. This work has been extended in \cite{vu2022solving} for solving the Time Dependent Minimum Tour Duration Problem and the Time Dependent Delivery Man Problem. \cite{he2021dynamic} applied DDD to the Minimum Duration Time-Dependent Shortest Path problem with piecewise linear travel times and find that the number of explored breakpoints drastically reduces, making the algorithm highly efficient and scalable. \cite{lagos2022dynamic} successfully developed and tested a DDD algorithm to solve the Continuous Time Inventory Routing Problem with Out-and-Back Routes. 
\section{Multi-Depot Vehicle Scheduling Problem with Trip Shifting}
\label{s:mdvsp}
The Multi-Depot Vehicle Scheduling Problem considers a set of scheduled trips $M$, where each trip $m \in M$ departs from a location $l^s_m$ at time $t^s_m$ and arrives at a location $l^e_m$ at time $t^e_m$. We let $\tau_m:= t^e_m-t^s_m$ denote the trip time of trip $m$. The aim is to optimally route vehicles through a network such that all trips are covered. Every single vehicle route must start its route at the beginning of the time horizon at one of the depots $d \in D$ and is obligated to return to the exact same depot before the end of the time horizon. Through the introduction of trip shifting to the MDVSP, trips are allowed to depart at most $\delta^{\max}$ earlier or later than originally timetabled.

\subsection{Time-Space Network Representation}
The network consists of a separate layer for each depot $d \in D$. The full network $G = (N, A)$ is thus the union of $|D|$ individual network layers $G^d = (N^d, A^d)$, where $N^d$ and $A^d$ are the set of nodes and arcs in the network layer for depot $d \in D$, respectively. A node $(l, t)$ represents a location $l \in L$, which is either a station or a depot, at a certain point in time $t$. Every layer contains two nodes for the depot: $i_d^s$ and $i_d^e$, representing the depot at the start and end of the horizon, respectively. The travel time between two locations $l, k \in L$ is denoted by $\tau_{lk}$. We assume that all trip and travel times are integer. As a result, it suffices to consider shifting departure times by an integer number of time units to find a continuous-time optimal solution. 

The nodes in a network layer are connected through several types of arcs, namely:

\begin{itemize}
	\item \textbf{Trip arcs} represent timetabled trips $m \in M$ and their possible deviations, and connect node $(l_m^s, t_m^s + \delta)$ corresponding to the start location and start time of the trip with node \\$(l_m^e, t_m^e+\delta)$ corresponding to the end location and end time, for $\delta \in \{-\delta^{\max}, -\delta^{\max} + 1, \dots,$ $\delta^{\max} - 1, \delta^{\max}\}$.  The set of all trip arcs belonging to trip $m\in M$ in the layer for depot $d\in D$ is denoted as $A^d(m)$. In order to cover a trip, either one of its trip arcs must be traversed.
	\item \textbf{Deadheading arcs} represent empty moves of vehicles between stations and connect the end station of one trip with the start station of another.
	\item \textbf{Pull-out and pull-in arcs} represent empty moves of vehicles in and out of the depot and connect the depot to the start station of a trip in case of a pull-out, or from the end station of a trip back to the depot in case of a pull-in. 
	\item \textbf{Waiting arcs} represent vehicles standing still and waiting at the depot or a station and connect every node at a location to the consecutive node of that location in time.
\end{itemize}

\noindent The foundation of the mathematical model for the MDVSP-TS is based on the multi-commodity time-space network flow formulation proposed by \cite{kliewer2006time} for the MDVSP. As this formulation does not consider trip shifting, we incorporate this feature in a similar fashion as \cite{kliewer2012multiple} in their model for the integrated vehicle and crew scheduling problem with multiple depots and consideration of time windows for scheduled trips.

Let $c_{ij}^d$ denote the costs of traversing arc $(i, j) \in A^d$ belonging to depot $d \in D$. Next, we introduce the decision variables $x_{ij}^d$ representing the number of vehicles traversing arc $(i, j) \in A^d$ belonging to depot $d \in D$. Altogether, the MDVSP-TS can be formulated as follows:
\begin{mini!}
	%
	{}
	%
	{\sum_{d \in D}\sum_{(i,j) \in A^d} c_{ij}^d x_{ij}^d, \label{eq:mdvspObj}}
	%
	{\label{formulation:mdvsp}}
	%
	{}
	%
	\addConstraint
	{\sum_{j:(j,i) \in A^d}x_{ji}^d - \sum_{j:(i,j) \in A^d}x_{ij}^d}
	{= 0 }
	{\forall d \in D, \forall i \in N^d \setminus \{i_d^s, i_d^e\},                \label{eq:flow}}
	\addConstraint
	{\sum_{d \in D}\sum_{(i,j) \in A^d(m)} x_{ij}^d}
	{= 1}
	{ \forall m \in M,               \label{eq:cover}}
	\addConstraint
	{ x_{ij}^d }
	{\in \mathbb{Z}_{\geq 0} \quad \label{eq:dom}}
	{\forall d \in D,\forall (i, j) \in A^d.}
\end{mini!}%
The objective function (\ref{eq:mdvspObj}) minimizes the total operating cost of the vehicles in the network. Flow conservation is ensured by constraints (\ref{eq:flow}), whereas constraints (\ref{eq:cover}) make sure that every trip $m$ is covered by exactly one vehicle by using one of the trip arcs in $\bigcup_{d \in D}A^d(m)$.

Throughout this paper, we assume that the objective is to minimize a combination of the number of vehicles and the total driving distance. In other words, there are no waiting costs and deviating from the original timetable is not penalized. We also assume that distances satisfy the triangle inequality. 

\subsection{Deadheading Arc Aggregation}
\label{ss:aggregation}
Key to the ability of the time-space representation to perform well on large instances is the aggregation of deadheading arcs in the network \citep{kliewer2006time}. Instead of explicitly modeling any possible connection between two trips at any point in time, arcs that are redundant are removed. By aggregating all arcs that enable two trips to be executed consecutively, the size of the network and the corresponding number of variables in the mathematical model is heavily reduced without compromising the feasible region of solutions. Aggregation is performed by means of the following two-step procedure:

 \textbf{First stage aggregation.} 
For each timetabled trip $m$ arriving at location $k$, the earliest compatible trip is considered at each location $l$ other than $k$. Then, a deadheading arc is constructed to the so-called \textit{first-match} of trip $m$ with location $l$, departing from the end node of trip $m$ and arriving at the starting node of the first-match. All later matches of trip $m$ are disregarded, as they do not enable new connections to be made. Trips at location $l$, $l \neq k$ that are not first-matches can be reached by consecutively traversing the first-match arc to $l$ and the waiting arcs at $l$.

 \textbf{Second stage aggregation.}
The set of first-match deadheading arcs is further reduced in the second stage of the aggregation process. Consider a trip $m$ at location $k$ and let $F_l$ be the set of trips arriving at any location $l$, $l \neq k$ that consider trip $m$ as their first-match at location $k$. Out of set $F_l$, only the trip with the latest arrival time keeps its direct deadheading connection with trip $m$, which we call the \textit{latest-first-match}. All the other arcs are removed from the model as their connection can be replaced by traversing waiting arcs at location $l$ combined with the latest-first-match arc.

\begin{figure}[H]
	\centering
	\includegraphics[width=\textwidth]{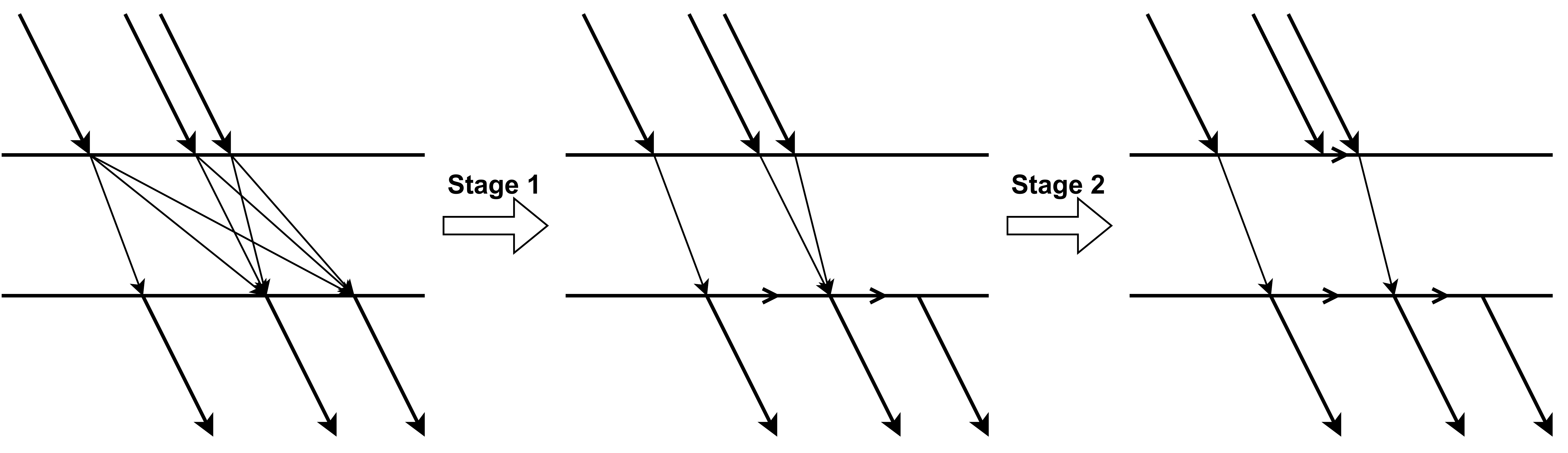}
	\caption{Network reduction through the two-stage aggregation process.}
	\label{fig:aggregation}
\end{figure}

\noindent Figure \ref{fig:aggregation} shows how deadheading arcs are aggregated between two locations in the network. At first, all feasible deadheading possibilities from the location with incoming trips to the location with outgoing trips are modeled explicitly. After the first stage, only deadheading arcs to the \textit{first-matches} of each of these incoming trips remain. In the second stage of aggregation, we switch our attention to the outgoing trips. Whenever an outgoing trip has multiple incoming \textit{first-match} arcs from the same location, all but the \textit{latest-first-match} are deleted.
\section{Dynamic Discretization Discovery}
\label{s:ddd}
DDD is an iterative refinement algorithm introduced by \cite{boland2017continuous} that uses a partially time-expanded network as a starting point. Algorithm \ref{alg:ddd} gives a high-level overview of the procedure. The partially time-expanded network $G_T$ serves as an optimistic representation of the problem. It does not explicitly model each potential trip shift, but underestimates the length of arcs to decrease the number of nodes and arcs in the network. In each iteration, the problem is solved on the relaxed network, yielding a lower bound to the optimal objective value. If the solution is \textit{implementable}, that is, can be converted to a feasible solution on the full network with the same cost, an optimal solution for the MDVSP-TS has been obtained. Otherwise, the partially time-expanded network is refined and a new iteration of the algorithm is entered. As long as any continuous-time feasible sequence of trips is represented in the partial network,  Algorithm~\ref{alg:ddd} can be used to solve the MDVSP-TS to any desired precision $\epsilon$.

Sections~\ref{ss:partial} - \ref{ss:adaptive} dive deeper into the implementation of each of the steps within the algorithm. Section~\ref{ss:pp} describes how the solution can be post-processed to eliminate unnecessary alterations from the timetable. 

\begin{algorithm}[H]
	\caption{Dynamic Discretization Discovery}
	\label{alg:ddd}
	\begin{algorithmic}[1]
		\State Create an initial partially time-expanded network $G_T$ that induces a relaxation of the MDVSP-TS
		\While{not solved}
		\State Solve MDVSP-TS($G_T$)
		\State Convert the solution on the partial network $G_T$ to a feasible solution on the full network
		\State network $G$
		\If {optimality gap $< \epsilon\%$}
		\State Terminate
		\EndIf
		\State Refine $G_T$ by lengthening one or more understimated arcs
		\EndWhile
	\end{algorithmic}
\end{algorithm}

\subsection{Creating a partially time-expanded network $G_T$}
\label{ss:partial}
Within the DDD framework, the partially time-expanded network should be constructed such that it can be regarded as a relaxation of the full network, generating valid lower bounds. We describe the network for a single network layer, omitting the index $d$ for brevity. For the MDVSP-TS, the partially time-expanded network is based on a node set $N_T$, which contains a subset of the nodes in the fully discretized network. To ensure that there always exists a feasible solution, we initialize the partially time-expanded network by including for every trip $m$ the start node $(l^s_m,t_m^s-\delta^{\max})$ and the end node $(l^e_m,t_m^e-\delta^{\max}).$

 As in the fully discretized network, the arc set $A_T$ contains trip arcs, deadheading arcs, pull-out and pull-in arcs and waiting arcs. The pull-out, pull-in and waiting arcs are constructed in the same way as in the full network. A pull-out arc connects each starting depot node to the first node at each location and a pull-in arc connects the last node at each location to the ending depot node. Moreover, waiting arcs are created for every location to connect nodes at the same location or depot throughout the time horizon. The trip and deadheading arcs are constructed such that any feasible route in the full network is represented in the partial network, such that MDVSP-TS($G_T$) is indeed a relaxation of MDVSP-TS defined on the full network. 
 
To define the trip arcs, let $\rho_l(t)$ denote the latest time point at or before $t$ at location $l$ which corresponds to a node in $N_T$. From every node $(l^s_m,t_m^s+\delta)$ corresponding to the start of a trip, we add a trip arc to node $\left(l_m^e,\rho_{l_m^e}\left(t_m^e+\delta\right)\right)$. Note that since we required that the network contains start and end nodes corresponding to a shift of $-\delta^{\max}$ for each trip, it holds that $ t_m^e-\delta^{\max}\leq\rho_{l_m^e}\left(t_m^e+\delta\right) \leq t_m^e+\delta^{\max}$. Figure~\ref{fig:tripArcsPartial} illustrates this construction for a trip $m$ with $t_m^s=3$, $\tau_m=2$, $\delta^{\max}=3$. As not all time points are present in the discretization, the trip arcs starting from $(l_m^s,3)$ and $(l_m^s,6)$ are ``rounded down" to a time point that is included. 
\begin{figure}[hb]
	\centering
	\includegraphics{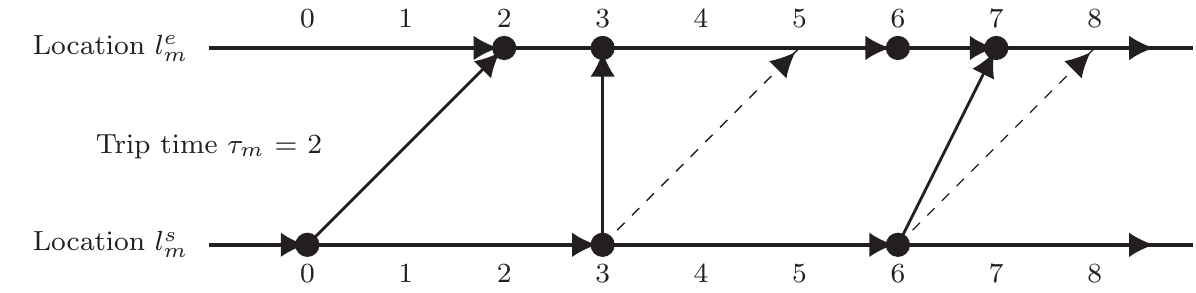}
	\caption{Trip arcs in the partially time-expanded network for a trip $m$ with $t_m^s=3$, $\tau_m=2$ and $\delta^{\max}=3$. Dashed arcs indicate the correct trip times, solid arcs are included in the partial network.}
	\label{fig:tripArcsPartial}
\end{figure}

The deadheading arcs connect every node $(k,t)$ that is the head of at least one trip arc to each location $l$ that serves as the origin of at least one trip. To ensure that the partially time-expanded network induces a valid relaxation of the continuous-time problem, DDD typically requires that all arcs $\left((l,t),(k,t')\right)$ satisfy the property that $t'\leq t+\tau_{lk}$, i.e.\ that all arcs are either the correct length or ``short". However, for the MDVSP-TS, we can identify cases where we can include ``long" arcs without destroying any continuous-time feasible solution. To assess the effectiveness of these techniques, we propose three variants for the deadheading arcs construction. 

\textbf{Short.} In this scheme, we add deadheading arcs between $(k,t)$ and $(l,\rho_l(t+\tau_{kl}))$. That is, arcs are always either the correct length or too short, as is customary in DDD. This is illustrated in Figure~\ref{fig:dh1}. 

\textbf{Medium.} Here, we exploit that we do not destroy any continuous-time feasible connections if we disregard arcs that underestimate the true travel time by more than twice the maximum deviation. In any solution, the start time of the trip that corresponds with node $(l,\rho_l(t+\tau_{kl}))$ is at most $\rho_l(t+\tau_{kl})+\delta^{\max}$. Therefore, if $\rho_l(t+\tau_{kl})<t+\tau_{kl}-2\delta^{\max}$, the trips corresponding to $(k,t)$ and $(l,\rho_l(t+\tau_{kl}))$ can never be performed in sequence. In this case, we can therefore ``round up" the arc, instead of round down. This construction is illustrated in Figure~\ref{fig:dh2}. The arc starting at $(k,1)$ is now rounded up instead of down, since otherwise the true travel time is underestimated by more than $2$. 

\textbf{Long.} In this scheme, we explicitly check to which trip (or trips) node $(l,\rho_l(t+\tau_{kl}))$ corresponds, and only add the connection if this trip is compatible with node $(k,t)$, i.e. if $t+\tau_{kl}$ falls inside its time window. If not, we round up the arc. This is illustrated in Figure~\ref{fig:dh3}. The arc starting at $(k,5)$ is now rounded up instead of down, as from this node there does not exists a starting time of trip $m_l(7)$ that can be reached in time.


We note that both for trip arcs and deadheading arcs, not all constructed arcs may actually enable new connections in the network. Hence, we remove redundant arcs according to the same aggregation procedure described in Section~\ref{ss:aggregation}. 

\begin{figure}[ht]
     \centering
     \begin{subfigure}[b]{0.9\textwidth}
         \centering
         \includegraphics[width=\textwidth]{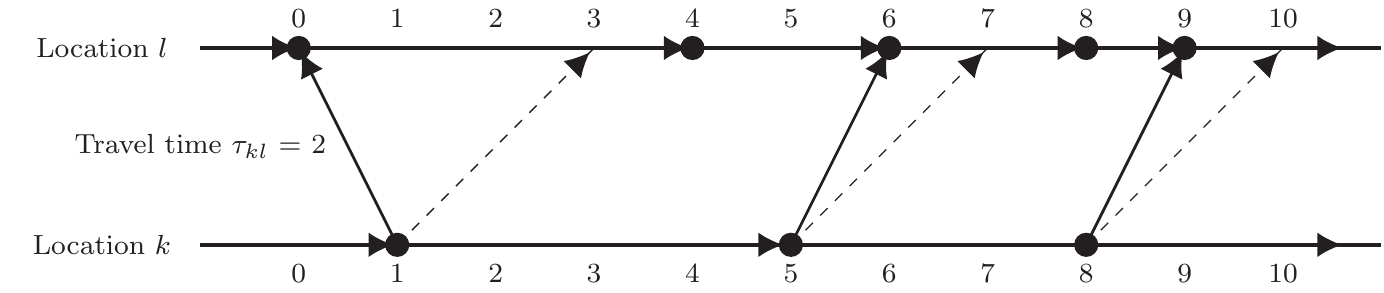}
         \caption{Short.}
         \vspace{20pt}
         \label{fig:dh1}
     \end{subfigure}
     \begin{subfigure}[b]{0.9\textwidth}
         \centering
         \includegraphics[width=\textwidth]{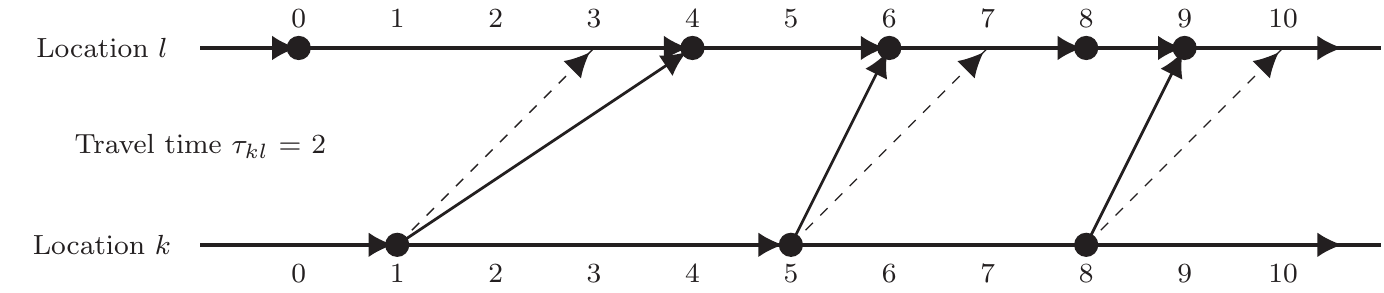}
         \caption{Medium.}
         \vspace{20pt}
         \label{fig:dh2}
     \end{subfigure}
     \begin{subfigure}[b]{0.9\textwidth}
         \centering
         \includegraphics[width=\textwidth]{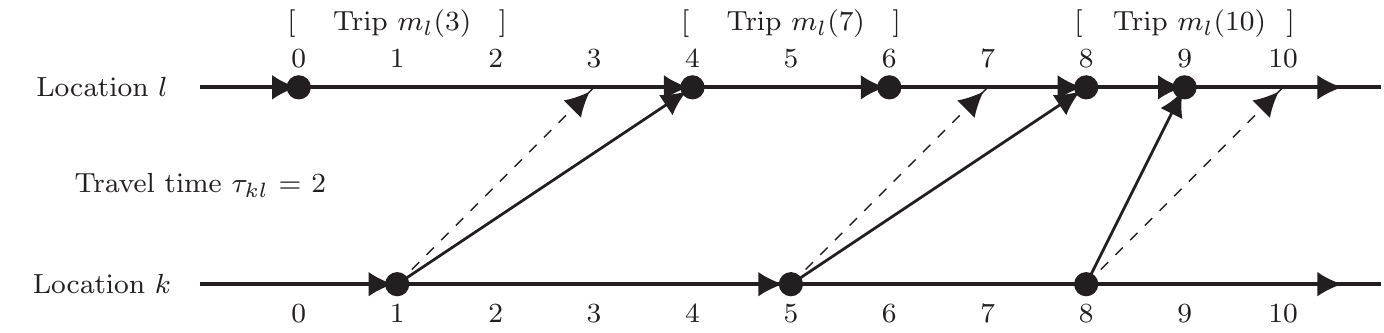}
         \caption{Long.}
         \label{fig:dh3}
     \end{subfigure}
        \caption{Deadheading arcs in the partially time-expanded network, with $\delta^{\max}=1$ and three trips originating at location $l$. Dashed arcs indicate the correct travel times, solid arcs are included in the partial network.}
        \label{fig:three graphs}
\end{figure}


\subsection{Refinement Strategies}
\label{ss:convert}

The solution to the MDVSP-TS on the partially time-expanded network does not guarantee feasibility on the full network, as the solution may contain one or more arcs with underestimated length. Such arcs may connect trips that cannot be executed consecutively when actual travel times are considered. However, the presence of underestimated arcs does not automatically deem a schedule infeasible. For example, if an arc that is 5 minutes too short is preceded by a waiting arc of at least that length, a feasible solution could be constructed by simply departing earlier. Next, we present three refinement strategies that evaluate whether a solution on the partially time-expanded network can be converted to a continuous-time feasible solution and if not, suggest one or more time points that should be added to the discretization. 

\subsubsection{Duty Refinement}

The first refinement strategy decomposes the solution of the MDVSP-TS on the partial network into duties, the sequences of trips that are carried out by the vehicles. Then, it checks whether each duty can be converted into a continuous-time feasible route, by picking the earliest possible departure time for each trip that respects actual travel times. If not all duties are implementable, it adds time points to the discretization to ensure that infeasible sequences of trips are no longer connected in the partial network in future iterations of DDD.

Formally, let $p$ denote the sequence of trips of a specific vehicle in visiting order in the current solution on the partially time-expanded network. The departure time of trip $i \in p$ is denoted by $\pi_i$. With slight abuse of notation, let $\tau_{i,i+1}$ denote the sum of the trip time of trip $i$ and the deadheading time between the end location of trip $i$ and the start location of trip $i+1$. Commencing at the first trip at $\pi_1 = t_1^s-\delta^{\text{max}}$, the departure time of trip $i$ is computed as $\pi_i = \text{max}\{\pi_{i-1} + \tau_{i-1,i}, t^s_{i} - \delta^{\max}\}$. If $\pi_i > t^s_{i} + \delta^{\max}$ for any of the trips, the vehicle route is infeasible. 

In case all duties are feasible, we have found the optimal solution and DDD terminates. Otherwise, the infeasible duties are processed one by one and time points are added to the partially discretized network to ensure that these duties are no longer present in the network in further iterations of DDD. For a given infeasible duty $p$, it holds that there exists some bottleneck trip $j^*$ where $\pi_{j*}>t^s_{j^*}+\delta^{\max}$. Then, for each $i<j^*$, the duty refinement strategy adds the nodes $(l_{i}^s,\pi_i)$ and $(l_{i}^e,\pi_i+\tau_i)$ to the discretization, i.e. a node at the start location at the correct start time and a node at the end location at the correct end time. For the bottleneck trip itself, it suffices to add the node $(l_{j^*}^s,\pi_{j^*})$. 

\begin{prop}
Let $p$ be a non-implementable duty. Then, after applying Duty Refinement, $p$ cannot be represented in the partially time-expanded network $G_T$. 
\label{prop}
\end{prop}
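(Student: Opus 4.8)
The plan is to argue by contradiction: suppose that after applying Duty Refinement to the non-implementable duty $p$, the duty $p$ is still representable in $G_T$, meaning there is a path in the refined partial network traversing exactly the trips of $p$ in order. I will show that this forces the computed quantities $\pi_i$ to actually be attainable as departure times along that path, contradicting the existence of the bottleneck trip $j^*$ with $\pi_{j^*} > t^s_{j^*} + \delta^{\max}$.

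First I would set up the key invariant. For each trip $i \le j^*$, Duty Refinement has inserted the node $(l_i^s, \pi_i)$ (and for $i < j^*$ also the end node $(l_i^e, \pi_i + \tau_i)$). Recall from Section~\ref{ss:partial} that in the partial network a trip arc for trip $i$ emanating from a start node $(l_i^s, t)$ goes to $\left(l_i^e, \rho_{l_i^e}(t + \tau_i)\right)$, and a deadheading arc from a node $(k,t)$ goes to $\left(l, \rho_l(t + \tau_{kl})\right)$ — in the ``Short'' construction, and in the ``Medium''/``Long'' variants possibly rounded up, but in all cases the head's time is at least $\rho_l(t+\tau_{kl})$ and, crucially, never exceeds what a continuous-time feasible schedule would require. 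The first step is therefore to prove, by induction on $i$ along the path, that any path in $G_T$ representing $p$ must reach the start node of trip $i$ at a time $t_i$ satisfying $t_i \ge \pi_i$ for all $i \le j^*$. The base case is $\pi_1 = t_1^s - \delta^{\max}$, which is the earliest start node of trip $1$ present in $G_T$ by the initialization rule, so $t_1 \ge \pi_1$ trivially. For the inductive step, from the start node of trip $i$ at time $t_i \ge \pi_i$, traversing the trip arc and then a deadheading arc (possibly with intervening waiting arcs, which only increase time) brings us to a start node of trip $i+1$ at time $t_{i+1} \ge \rho_{l^e_i}(t_i + \tau_i) + (\text{deadhead}) \ge \dots$; here I would use that $\rho$ is monotone and that the recursive definition $\pi_{i+1} = \max\{\pi_i + \tau_{i,i+1}, t^s_{i+1} - \delta^{\max}\}$ is precisely the earliest time reachable, so $t_{i+1} \ge \pi_{i+1}$.

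The second step closes the argument: applying the invariant at $i = j^*$ gives that the path reaches a start node of trip $j^*$ at time $t_{j^*} \ge \pi_{j^*} > t^s_{j^*} + \delta^{\max}$. But every start node of trip $j^*$ present in the partial network lies in the interval $[t^s_{j^*} - \delta^{\max},\, t^s_{j^*} + \delta^{\max}]$ — the trip arcs were only ever constructed from such nodes, and Duty Refinement added the node $(l^s_{j^*}, \pi_{j^*})$ but this would itself violate the window, so in fact the refinement adds this node \emph{and} it is the window-violating node that makes $p$ non-representable; more carefully, I would argue that no trip arc for $j^*$ exists from any node at time $> t^s_{j^*}+\delta^{\max}$, so the path cannot continue through trip $j^*$, contradiction. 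Hence $p$ is not representable in the refined $G_T$.

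The main obstacle I anticipate is handling the ``Medium'' and ``Long'' deadheading constructions cleanly, since there the head time of a deadheading arc can be \emph{larger} than $\rho_l(t + \tau_{kl})$ (it is rounded up). I need the inequality $t_{i+1} \ge \pi_{i+1}$ to survive this: rounding up only makes $t_{i+1}$ larger, which is fine for a lower bound, so the invariant is actually robust — but I must be careful that rounding up never \emph{skips past} the node $(l^s_{i+1}, \pi_{i+1})$ in a way that lets the path use a different, earlier-in-the-list arc; here the aggregation procedure of Section~\ref{ss:aggregation} and the monotonicity of $\rho$ should guarantee that the relevant node is exactly the one reached. A secondary subtlety is the treatment of trips $i < j^*$ versus the bottleneck: for $i < j^*$ we know $\pi_i \le t^s_i + \delta^{\max}$, so the inserted nodes are legitimate window-respecting nodes and the induction passes through them smoothly; the asymmetry (only a start node for $j^*$) must be shown to suffice precisely because we only need to block the path's arrival, not its departure, at $j^*$.
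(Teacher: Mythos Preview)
Your proposal is correct and follows essentially the same route as the paper's proof: both establish that any path in the refined $G_T$ covering trips $1,\dots,j^*-1$ must arrive at $l^s_{j^*}$ no earlier than $\pi_{j^*}$, and then observe that since $\pi_{j^*}>t^s_{j^*}+\delta^{\max}$ no trip arc for $j^*$ emanates from such a node. Your version is simply more explicit---you spell out the inductive invariant $t_i\ge\pi_i$ and the case split on whether $\pi_{i+1}$ is determined by the travel-time term or the time-window floor---whereas the paper compresses this into the single sentence ``since this path takes the earliest possible departure options for each trip, there does not exist a path that contains the same trips but arrives at $l^e_{j^*-1}$ earlier than $\pi_{j^*-1}+\tau_{j^*-1}$''; your extra care about the Medium/Long deadheading constructions is also sound (rounding up only helps the lower bound) and is a point the paper's proof leaves implicit.
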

\begin{proof}
After duty refinement, $G_T$ contains the nodes $(l_{i}^s,\pi_i)$ and $(l_{i}^e,\pi_i+\tau_i)$ for $i<j^*$ and $(l_{j^*}^s,\pi_{j^*})$. It follows that in $G_T$, we can connect the trips 1 up to $j^*-1$ using the path $$(l_{1}^s,\pi_1)\rightarrow (l_{1}^e,\pi_i+\tau_1)\rightarrow(l_{2}^s,\pi_2)\rightarrow (l_{2}^e,\pi_2+\tau_2)\rightarrow \hdots \rightarrow(l_{j^*-1}^s,\pi_{j^*-1})\rightarrow (l_{j^*-1}^e,\pi_{j^*-1}+\tau_{j^*-1}).$$
Moreover, since this path takes the earliest possible departure options for each trip, there does not exists a path that contains the same trips but arrives at $l_{j^*-1}^e$ earlier than $\pi_{j^*-1}+\tau_{j^*-1}$. From this node, it follows from the definition of $\pi$ that the deadhead arc to the start location of trip $j^*$ connects to the node $(l_{j^*}^s,\pi_{j^*})$. However, as $p$ is non-implementable, it holds that $\pi_{j*}>t^s_{j^*}+\delta^{\max}$, such that this node does not correspond to a start node of a trip arc belonging to $j^*$. Therefore, it is impossible to connect all trips in $p$ in a single path in $G_T$. 
\end{proof}
It directly follows from Proposition~\ref{prop} that the proposed scheme returns a continuous-time optimal solution in a finite number of iterations, since there only exists a finite number of duties. 

\begin{figure}[ht]
     \centering
     \begin{subfigure}[b]{0.48\textwidth}
         \centering
         \includegraphics[width=\textwidth]{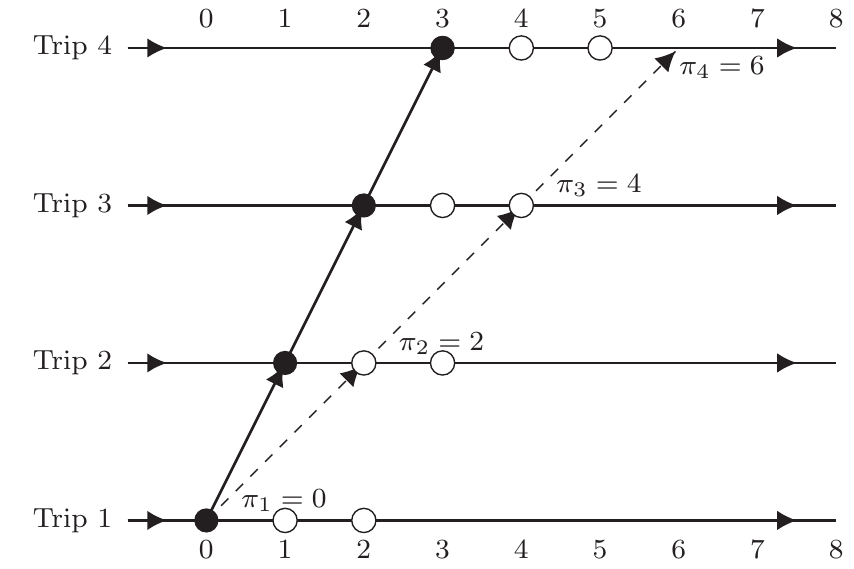}
         \caption{Before refinement.}
         \label{fig:rs1}
     \end{subfigure}
     \hfill
     \begin{subfigure}[b]{0.48\textwidth}
         \centering
         \includegraphics[width=\textwidth]{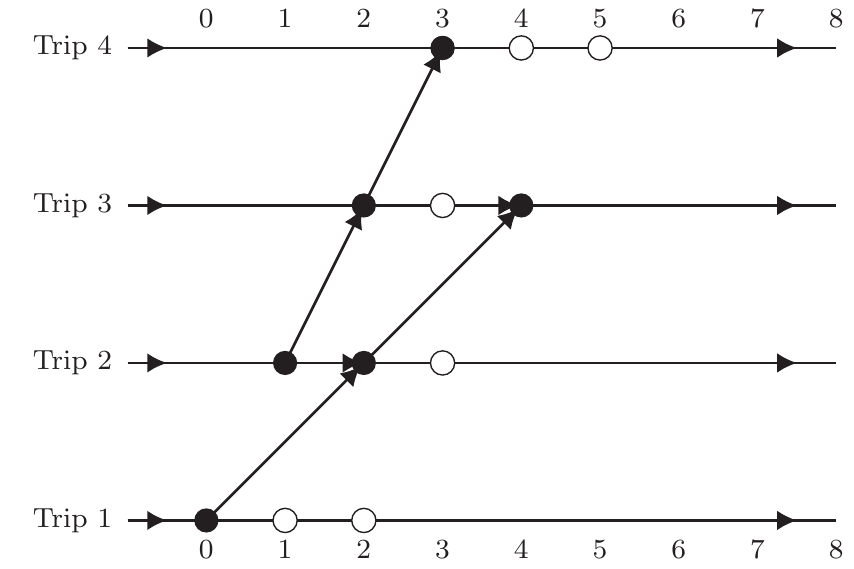}
         \caption{After refinement.}
         \label{fig:rs2}
     \end{subfigure}
        \caption{Illustration of duty refinement for a duty with four trips with original start times at 1, 2, 3 and 4 respectively, and $\delta^{\max}=1$.}
        \label{fig:rs}
\end{figure}

Figure~\ref{fig:rs} presents a visualization of the duty refinement strategy for an infeasible duty containing four trips, where $\tau_{i,i+1}=2$ for $i=1, 2,$ and 3, and $\delta^{\max}=1$. Note that the depicted network is not the partially time-expanded network itself, but a derivative; the solid nodes correspond to start nodes of the trips that are present in the partial network; the solid arcs correspond to connections that are possible in the partial network. The dashed arcs indicate the actual travel times. It can be seen in Figure~\ref{fig:rs1} that when evaluating the feasibility of this duty, we find that the earliest possible starting time of trip 4 is at $t=6$, which violates the maximum deviation. Figure~\ref{fig:rs2} depicts the situation after refinement, where it is no longer possible to connect trips 1 up to 4 due to the presence of additional nodes.

\subsubsection{Optimizing the Duty Decomposition}
\label{sec:optdd}
A disadvantage of the duty refinement strategy is that its outcome may depend on the used duty decomposition. To illustrate the impact of selecting the duty decomposition, consider Figure~\ref{fig:whyDecomp}. In this example, the trip sequences (1, 3, 5), (1, 3, 6) and (2, 4, 6) are feasible. However, the sequence (2, 4, 5) is infeasible, as the earliest departure times for these trips will be 1, 3 and 5, respectively, but the latest allowed departure time of trip 5 is 4. Therefore, the duty decomposition (1, 3, 6), (2, 4, 5) is infeasible, whereas the decomposition (1, 3, 5), (2, 4, 6) is feasible.  We next develop two refinement strategies that exploit this feature by \textit{optimizing} the duty decomposition.  Intuitively, by carefully distributing the used short arcs over the duties we have a higher chance of ending up with implementable duties. 

\begin{figure}[h]
\centering \includegraphics{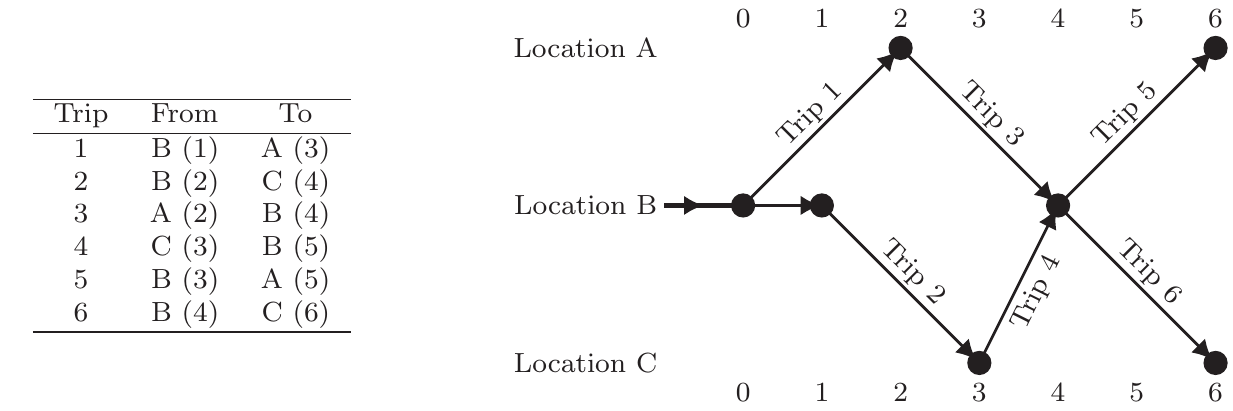}
    \caption{Solution on the partial network with a non-unique duty decomposition. The table gives the original timetable, and $\delta^{\max}=1$.}
    \label{fig:whyDecomp}
\end{figure}

As a first step, we decompose the solution in the partial network into components. Depot nodes are disregarded when determining these components, so a pair of nodes belongs to the same component if they are connected by a path that does not contain any pull-out or pull-in arcs. This decomposition is visualized in Figure~\ref{fig:comp}.

\begin{figure}[h]
\centering \includegraphics{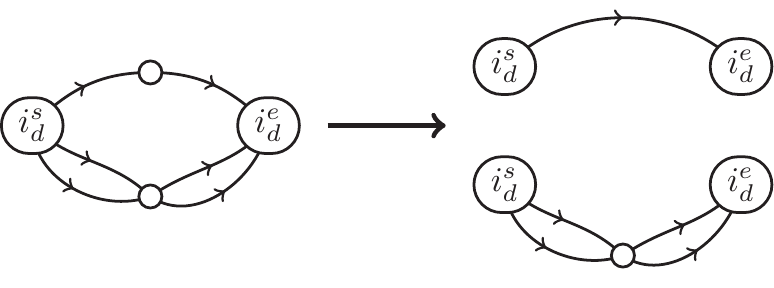}
    \caption{Illustration of the decomposition of a solution into components.}
    \label{fig:comp}
\end{figure}

Next, we examine the component $C$. Let $P$ denote the set of all duties that are supported by a component and let $M_C$ denote the set of all trips that are performed by duties in $P$. Let $P_m$ denote the set of duties that cover trip $m$. By evaluating the implementability of all $p\in P$, we can determine $n_p$, the number of time points that needs to be added to $G_T$ were we to select $p$ in the duty decomposition. After introducing binary decision variables $z_p$ for selecting duties in the decomposition, we can determine the continuous-time feasibility of the component $C$ by solving the following binary program:
\begin{mini!}
	%
	{}
	%
	{\sum_{p\in P}n_pz_p, \label{eq:decompOptObj}}
	%
	{\label{formulation:decomp}}
	%
	{}
	%
	\addConstraint
	{\sum_{p\in P_m} z_p}
	{= 1 \label{eq:coverDecomp}}
	{\forall m \in M_C,}
	\addConstraint
	{z_p}
	{\in \mathbb{B} \quad \label{eq:decompDom}}
	{\forall p \in P.}
\end{mini!}%
The objective~(\ref{eq:decompOptObj}) minimizes the number of time points that needs to be added to the partial network. Constraints~(\ref{eq:coverDecomp}) ensure that the duty decomposition is valid by covering every trip. 

If Problem~(\ref{formulation:decomp}) returns an objective value of 0, the component is feasible. Otherwise, the selected duty decomposition contains at least one non-implementable duty and it is necessary to further refine the partial network. We incorporate this idea in two refinement strategies. The first strategy, \textit{Fewer Time Points}, applies duty refinement to the infeasible duties in the optimal solution to Problem~(\ref{formulation:decomp}), ensuring that the number of added time points in an iteration is minimized. The second strategy, \textit{Fewer Iterations}, is more aggressive and applies duty refinement to \textit{all} infeasible duties in $P$, even those that were not selected in the duty decomposition. Here, the intuition is that these infeasible duties might otherwise show up in later iterations of DDD, which we can prevent by refining them as a precaution. 

\subsection{Computing Feasible Solutions}
Whenever a trip in a duty in the partially time-expanded network solution cannot be executed respecting actual travel time and time window constraints, the solution as a whole is infeasible. However, we can often convert it to a feasible solution relatively quickly, by maintaining the feasible part and re-optimizing the infeasible part. To do so, we simply solve the MDVSP-TS with the trips that are not covered by feasible duties on a fully discretized network. Based on preliminary experiments, we only call this procedure when the number of trips performed by infeasible duties is at most 100, such that the time spent on computing upper bounds is very small compared to the whole algorithm.

\subsection{Adaptive MIP Tolerance}
\label{ss:adaptive}
In early iterations of DDD, the partially time-expanded network typically induces a relatively weak relaxation of the problem, such that there is a large gap between the lower and upper bound. Therefore, it pays off to start with a larger optimality tolerance and tighten the tolerance as the algorithm progresses. Based on preliminary experiments, we initially set a \textit{relative} optimality tolerance of 1\%. Once DDD has found a feasible solution, we set an \textit{absolute} optimality tolerance of $\frac{\text{UB}-\text{LB}}{10}$, where $\text{UB}$ and $\text{LB}$ are the current upper and lower bound, respectively.

\subsection{Post-Processing} 
\label{ss:pp}
As deviations from the original timetable are not penalized, the solution returned by DDD might alter the timetable unnecessarily. Therefore, in a post-processing step, we minimize these deviations, keeping the costs of the solution fixed. Analogous to the refinement strategies, we present two post-processing strategies. 

\subsubsection{Duty Post-Processing} 

A new mathematical model is solved for every duty that minimizes deviations. Given duty $p$, this model chooses departure times for all trips $i\in p$ such that all trips on the route can be executed within the prespecified time window whilst adhering to the actual travel times between locations. 

Alongside the departure time variable $\pi_i$, decision variables $\delta^+_i$ and $\delta^-_i$ are introduced, representing the potential delay or advancement of the departure, respectively. Both $\delta^+_i$ and $\delta^-_i$ are integer-valued and range from 0 to $\delta^{\max}$. This results in the following formulation:

\begin{mini!}
	%
	{}
	%
	{\sum_{i \in p}\delta^+_i + \delta^-_i, \label{eq:ppObj}}
	%
	{\label{formulation:pp}}
	%
	{}
	%
	\addConstraint
	{\pi_i+\tau_{i,i+1}}
	{\leq \pi_{i+1} }
	{\forall i \in p \setminus |p|,               \label{eq:compat}}
	\addConstraint
	{\pi_i}
	{ =t_i^s+\delta^+_i - \delta^-_i \quad}
	{\forall i \in p,               \label{eq:linkPiDelta}}
	\addConstraint
	{\delta^+_i,\delta^-_i}
	{\in \left\{0,1,...,\delta^{\max} \right\} \quad \label{eq:deltaDom}}
	{\forall i \in p,}
	\addConstraint
	{\pi_i}
	{\geq 0 \quad \label{eq:piDom}}
	{\forall i \in p.}
\end{mini!}%
The objective function (\ref{eq:ppObj}) minimizes the sum of deviations from the timetable. For every set of consecutive locations on the vehicle duty, compatibility is protected by the travel time constraints (\ref{eq:compat}). If trip $j$ is performed after trip $i$, the departure time of $j$ has to be larger than or equal to the sum of the departure time of $i$, the trip time of $i$ and the deadheading time between $i$ and $j$. Constraints~(\ref{eq:linkPiDelta}) set $\pi_i$ equal to the sum of the original departure time of the trip ($t^s_{i}$), the delay $\delta^+_i$ and the advancement -$\delta^-_i$.

\subsubsection{Optimizing the Duty Decomposition} 
In Section~\ref{sec:optdd}, we optimized the duty decomposition with respect to the number of time points that should be added in the refinement step. In a similar fashion, it is also possible to optimize the duty decomposition to minimize deviations from the timetable. This can be achieved by replacing the objective function in Problem~(\ref{formulation:decomp}) by 
$$\sum_{p\in P}\delta_pz_p,$$
where $\delta_p$ denotes the minimum deviation possible by duty $d$, which is obtained by solving Problem~(\ref{formulation:pp}) for every $p\in P$.

\section{Computational Results}
\label{s:results}
This section dives into the computational results of the DDD algorithm. First, a description is given of the data generating process and the resulting problem instances, followed by a detailed analysis of the performance of DDD.

\subsection{Problem Instances}
Datasets for the MDVSP will be generated according to the data generating procedure introduced by \cite{carpaneto1989branch} and later employed by, amongst others, \cite{pepin2009comparison}, \cite{desfontaines2018multiple} and \cite{kulkarni2018new}. These problem instances contain $|L|$ locations that serve as departure or arriving locations for trips, spread out randomly over a square grid, and four depots, located in the corners of the grid. There are two types of trips.

\begin{itemize}
	\item \textbf{Short trips} start and end at different, randomly chosen locations. The execution time of a short trip exceeds the travel time between two locations by at most 45 minutes. Starting times during peak hours (7:00 - 8:00 and 17:00 - 18:00) are chosen with a higher probability than off-peak departures. In the default setting, short trips comprise 40\% of the total number of trips $|M|$.
	\item \textbf{Long trips} are round trips departing and arriving at the same, randomly chosen location. They are distributed randomly over the time horizon and take between three and five hours to complete.
\end{itemize}

\noindent The cost $c_{ij}$ of traversing an arc $(i, j) \in A$ in the network is calculated as follows:

\begin{itemize}
	\item If $(i, j)$ is a trip arc, $c_{ij} = 0$.
	\item If $(i, j)$ is a deadheading arc. $c_{ij} = \tau_{ij}$.
	\item If $(i, j)$ is a waiting arc, $c_{ij} = 0$.
	\item If $(i, j)$ is a pull-in or pull-out arc, $c_{ij} = 500 + \tau_{ij}$.
\end{itemize}

\noindent We consider instances with 500, 750 and 1000 trips, consisting of a mix (`M') of 40\% short trips and 60\% long trips. For each problem type, ten different instances are generated. By default, the number of locations in the network is one tenth of the number of trips. The experiments are performed on a machine with an AMD Rome 7H12 2.6GHz
processor with 32 cores and 64 GB RAM. CPLEX 22.1 is used to solve the mixed-integer programs. We use the default solver settings, except for the optimality tolerance, which we set adaptively according to Section~\ref{ss:adaptive}. We terminate DDD if the absolute gap between lower and upper bound is strictly smaller than 1, i.e. once we have proven exact optimality. 


\subsection{Impact of Deadheading Arc Construction} 
In the first experiment, we fix the refinement strategy to Duty Refinement, and solve the 500M instances with the Short, Medium and Long deadheading arc generation schemes. As a benchmark, we solve the instances on the fully discretized network. We consider a maximum deviation $(\delta^{\max})$ of 1 up to 5 minutes. 

Figure~\ref{fig:compareDH} presents the average computation time for the three deadheading arc construction techniques and the fully discretized model. DDD with Short deadheading arcs performs considerably worse than Medium and Long, and only outperforms the benchmark for the largest value of the maximum deviation. With Medium and Long deadheading arcs, DDD is not only faster than the fully discretized model for every value of $\delta^{\max}$, but its relative speed increases in the value of $\delta^{\max}$. In the most complex case of a 5-minute maximum deviation, naively modeling all potential trip shifts explicitly leads to 11 times the running time of DDD with Long deadheading arcs.

We can explain the difference in performance of the different deadheading arc construction schemes using Table~\ref{tbl:compareDAG}, which presents the average number of iterations, number of nodes in the partial network in the final DDD iterations and the obtained lower bound in the first iteration. Compared to Medium and Long deadheading arcs, DDD with Short deadheading arcs produces a first lower bound that is much weaker and requires many more iterations and nodes to prove optimality. DDD with Long arcs also clearly outperforms DDD with Medium arcs, be it with by a smaller margin. What is surprising is that the first bound obtained with Medium or Long deadheading arcs is exceptionally strong: for $\delta^{\max}$ equal to 5 minutes, the absolute gaps between the first lower bound and optimal solution are only 340 (0.27\%) and 53 (0.04\%), respectively. 

\begin{filecontents}{compareDH.csv}
DeltaMax,Full,Short,Medium,Long
1,64.9,572,39.3,36.5
2,210.5,738,73.8,63.1
3,478.3,770.4,88.6,75.2
4,1017.1,1194.7,223,126.5
5,1384.8,1166.6,189.5,126.6
\end{filecontents}

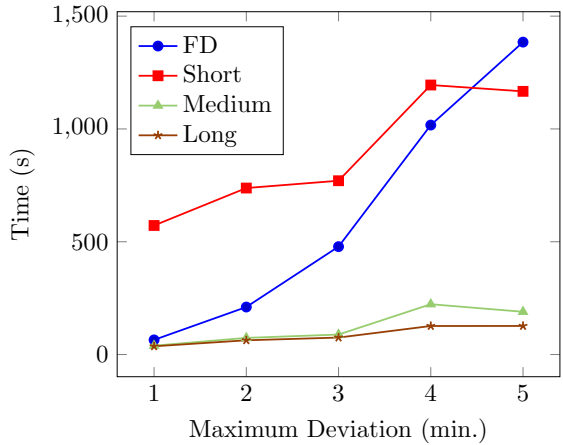
\begin{figure}[ht]
\centering
\begin{tikzpicture}[scale=0.85]
    \centering
\begin{axis}[xlabel={Maximum Deviation (min.)},
  ylabel={Time (s)},every axis plot/.append style={thick}, legend style={at={(0.03,0.96)},anchor=north west},legend cell align={left}]
\addplot table [x=DeltaMax, y=Full, col sep=comma] {compareDH.csv};
\addlegendentry{FD}
\addplot+[mark=square*,mark options={fill=red}] table [x=DeltaMax, y=Short, col sep=comma] {compareDH.csv};
\addlegendentry{Short}
\addplot+[YellowGreen,mark=triangle*,mark options={fill=YellowGreen}] table [x=DeltaMax, y=Medium, col sep=comma] {compareDH.csv};
\addlegendentry{Medium}
\addplot+[RawSienna	,mark=star,mark options={fill=RawSienna	}] table [x=DeltaMax, y=Long, col sep=comma] {compareDH.csv};
\addlegendentry{Long}
\end{axis}
\end{tikzpicture}
\caption{Average time to optimality on the 500M instances for the fully discretized model (FD) and for DDD with Short, Medium and Long deadheading arc construction.}
\label{fig:compareDH}
\end{figure}

\begin{table}[b]
\caption{Computational results for the 500M instances with different deadheading arc construction schemes. For each value of the maximum deviation, the objective is reported, as well as for each scheme the average number of iterations, number of nodes in the final iteration of DDD and the first lower bound.}
\label{tbl:compareDAG}
	\resizebox{\columnwidth}{!}{%
\begin{tabular}{crrrrrrrrrrrrr}
\hline
\multicolumn{1}{l}{}                                        & \multicolumn{1}{l}{}     & \multicolumn{1}{l}{} & \multicolumn{3}{c}{Short}                                                       & \multicolumn{1}{c}{} & \multicolumn{3}{c}{Medium}                                                      & \multicolumn{1}{c}{} & \multicolumn{3}{c}{Long}                                                        \\ \cline{4-6} \cline{8-10} \cline{12-14} 
$\delta^{\max}$ (min.) & \multicolumn{1}{c}{Obj.} & \multicolumn{1}{c}{} & \multicolumn{1}{c}{Iter.} & \multicolumn{1}{c}{Nodes} & \multicolumn{1}{c}{$1^{\text{st}}$ LB} & \multicolumn{1}{c}{} & \multicolumn{1}{c}{Iter.} & \multicolumn{1}{c}{Nodes} & \multicolumn{1}{c}{$1^{\text{st}}$ LB} & \multicolumn{1}{c}{} & \multicolumn{1}{c}{Iter.} & \multicolumn{1}{c}{Nodes} & \multicolumn{1}{c}{$1^{\text{st}}$ LB} \\ \hline
1                                                           & 131046                   &                      & 41.5                      & 13440                     & 49274                   &                      & 3.7                       & 3989                      & 131039                  &                      & 3.6                       & 3988                      & 131040                  \\
2                                                           & 129980                   &                      & 42.9                      & 13983                     & 48603                   &                      & 5.8                       & 4028                      & 129966                  &                      & 5.1                       & 4022                      & 129967                  \\
3                                                           & 128616                   &                      & 43.2                      & 14360                     & 47961                   &                      & 7.4                       & 4104                      & 128588                  &                      & 6.9                       & 4087                      & 128590                  \\
4                                                           & 127122                   &                      & 48.4                      & 15073                     & 47206                   &                      & 11.8                      & 4298                      & 127068                  &                      & 8.8                       & 4219                      & 127070                  \\
5                                                           & 126226                   &                      & 46.9                      & 15509                     & 45952                   &                      & 12.0                      & 4451                      & 125886                  &                      & 8.4                       & 4261                      & 126173                  \\ \hline
\end{tabular}
}
\end{table}

\subsection{Impact of Refinement Strategy} 

In the next experiment, we solve the 500M and 750M instances with the refinement strategies Duty, Fewer Time Points and Fewer Iterations. We fix the deadheading arc construction scheme to Long, as it proved to be superior in the previous experiment, and will continue using this scheme in all remaining experiments. We again consider a maximum deviation $(\delta^{\max})$ of 1 up to 5 minutes. 

Figure~\ref{fig:compareRS} presents the average computation time for the three refinement strategies. Table~\ref{tbl:compareRS} presents the average number of iterations and number of nodes in the final DDD iteration. We find that the difference in solution time between the strategies is fairly small. Fewer Iterations is the fastest strategy for the majority of the cases, but the difference is limited. We do observe that Fewer Time Points, which minimizes the number of time points that is added to the discretization in an iteration is able to prove optimality on the smallest networks. However, this does not seem to result in a decreased computation time as it comes at the expense of requiring more iterations to prove optimality. In turn, Fewer Iterations results in a larger network, as it adds time points rather aggressively, but typically wins in terms of computation time as it succeeds in requiring fewer iterations.

\begin{filecontents}{compareRS.csv}
DeltaMax,Path,FTP,FI
1,36.5,37.6,34.9
2,63.1,57.7,59.2
3,75.2,68.3,66.8
4,126.5,117.2,112.2
5,126.6,127,104.8
\end{filecontents}

\begin{filecontents}{compareRS750.csv}
DeltaMax,Path,FTP,FI
1,185.5,193.3,181.7
2,225.4,228.6,222.5
3,300.6,306.5,316.5
4,468.4,402.6,375.5
5,805.7,851.7,741.1
\end{filecontents}

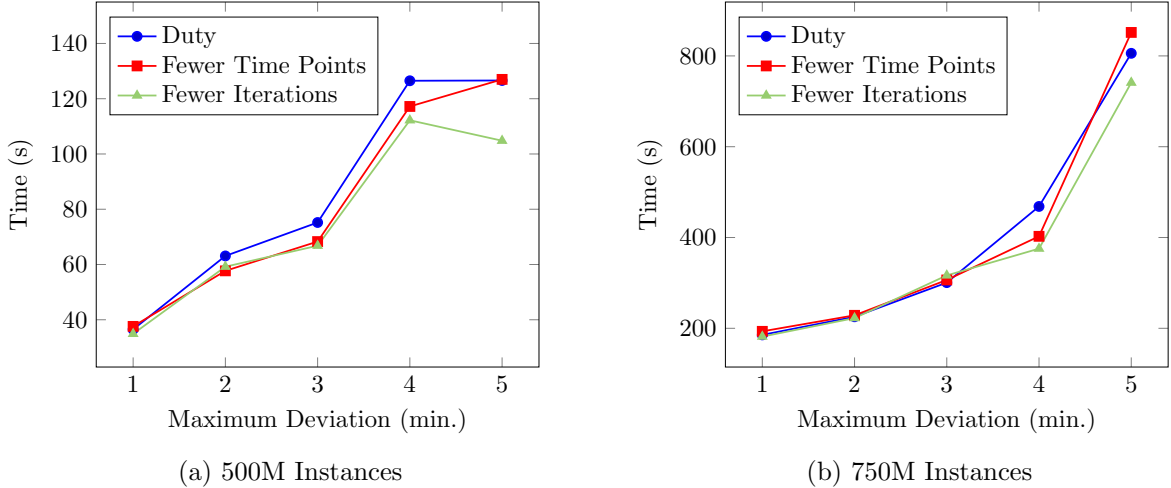
\begin{figure}[ht]
\centering
\begin{subfigure}{0.48\textwidth}
\begin{tikzpicture}[scale=0.85]
    \centering
\begin{axis}[xlabel={Maximum Deviation (min.)},
  ylabel={Time (s)},every axis plot/.append style={thick}, legend style={at={(0.03,0.96)},anchor=north west}, legend cell align={left}, ymax = 155]
\addplot table [x=DeltaMax, y=Path, col sep=comma] {compareRS.csv};
\addlegendentry{Duty}
\addplot+[mark=square*,mark options={fill=red}] table [x=DeltaMax, y=FTP, col sep=comma] {compareRS.csv};
\addlegendentry{Fewer Time Points}
\addplot+[YellowGreen,mark=triangle*,mark options={fill=YellowGreen}] table [x=DeltaMax, y=FI, col sep=comma] {compareRS.csv};
\addlegendentry{Fewer Iterations}
\end{axis}
\end{tikzpicture}
\caption{500M Instances}
\label{fig:compareRSa}
\end{subfigure}
\hfill
\begin{subfigure}{0.48\textwidth}
\begin{tikzpicture}[scale=0.85]
    \centering
\begin{axis}[xlabel={Maximum Deviation (min.)},
  ylabel={Time (s)},every axis plot/.append style={thick}, legend style={at={(0.03,0.96)},anchor=north west}, legend cell align={left}]
\addplot table [x=DeltaMax, y=Path, col sep=comma] {compareRS750.csv};
\addlegendentry{Duty}
\addplot+[mark=square*,mark options={fill=red}] table [x=DeltaMax, y=FTP, col sep=comma] {compareRS750.csv};
\addlegendentry{Fewer Time Points}
\addplot+[YellowGreen,mark=triangle*,mark options={fill=YellowGreen}] table [x=DeltaMax, y=FI, col sep=comma] {compareRS750.csv};
\addlegendentry{Fewer Iterations}
\end{axis}
\end{tikzpicture}
\caption{750M Instances}
\label{fig:compareRSb}
\end{subfigure}
\caption{Average time to optimality for the refinement strategies Duty, Fewer Time Points and Fewer Iterations.}
\label{fig:compareRS}
\end{figure}

\begin{table}[]
\caption{Computational results for the 500M and 750M instances, with different refinement strategies. For each instances, the objective and first lower bound are reported, as well as for each strategy, the average number of iterations and average number of nodes in the final iteration of DDD.} 
\label{tbl:compareRS}
\resizebox{\columnwidth}{!}{%
\begin{tabular}{ccrrrrrrrrrrr}
\hline
\multicolumn{1}{l}{} & \multicolumn{1}{l}{}                                & \multicolumn{1}{l}{}     & \multicolumn{1}{l}{}       & \multicolumn{1}{l}{} & \multicolumn{2}{c}{Duty}                              & \multicolumn{1}{c}{} & \multicolumn{2}{c}{Fewer Time Points}                 & \multicolumn{1}{c}{} & \multicolumn{2}{c}{Fewer Iterations}                  \\ \cline{6-7} \cline{9-10} \cline{12-13} 
Instance             & $\delta^{\max}$ (min.) & \multicolumn{1}{c}{Obj.} & \multicolumn{1}{c}{$1^\text{st}$ LB} & \multicolumn{1}{c}{} & \multicolumn{1}{c}{Iter.} & \multicolumn{1}{c}{Nodes} & \multicolumn{1}{c}{} & \multicolumn{1}{c}{Iter.} & \multicolumn{1}{c}{Nodes} & \multicolumn{1}{c}{} & \multicolumn{1}{c}{Iter.} & \multicolumn{1}{c}{Nodes} \\ \hline
                     & 1                                                   & 131046                   & 131040                     &                      & 3.6                       & 3988                      &                      & 3.4                                     & 3986                                   &                      & 3.4                                    & 3986                                   \\
                     & 2                                                   & 129980                   & 129967                     &                      & 5.1                       & 4022                      &                      & 4.6                                     & 4016                                   &                      & 4.7                                    & 4020                                   \\
500M                 & 3                                                   & 128616                   & 128590                     &                      & 6.9                       & 4087                      &                      & 6.1                                     & 4070                                   &                      & 6.1                                    & 4084                                   \\
                     & 4                                                   & 127122                   & 127070                     &                      & 8.8                       & 4219                      &                      & 8.4                                     & 4180                                   &                      & 8.0                                    & 4222                                   \\
                     & 5                                                   & 126226                   & 126173                     &                      & 8.4                       & 4261                      &                      & 9.3                                     & 4258                                   &                      & 8.3                                    & 4277                                   \\ \vspace{-12pt}\\
 \hline \vspace{-12pt} \\
                     & 1                                                   & 193603                   & 193592                     &                      & 4.6                       & 5981                      &                      & 4.4                                     & 5981                                   &                      & 4.1                                    & 5978                                   \\
                     & 2                                                   & 191118                   & 191007                     &                      & 5.8                       & 6021                      &                      & 5.4                                     & 6004                                   &                      & 5.3                                    & 6016                                   \\
750M                 & 3                                                   & 189619                   & 189597                     &                      & 7.1                       & 6096                      &                      & 6.7                                     & 6074                                   &                      & 6.3                                    & 6109                                   \\
                     & 4                                                   & 187548                   & 187504                     &                      & 10.8                      & 6274                      &                      & 10.3                                    & 6254                                   &                      & 8.4                                    & 6298                                   \\
                     & 5                                                   & 185377                   & 185129                     &                      & 12.0                      & 6545                      &                      & 12.5                                    & 6484                                   &                      & 10.6                                   & 6601                                   \\ \hline
\end{tabular}
}
\end{table}

\subsection{Detailed Performance Large Instances}
In the next experiment, we consider the 1000M instances. We use DDD with Long deadheading arcs, and the Fewer Iterations refinement strategy. As before, we vary $\delta^{\max}$ from 1 to 5 minutes. We use a time limit of 7200 seconds. 

Table~\ref{tbl:largeI} summarizes the performance of DDD on the large instances. We find that within the time limit of 2 hours, the majority of the instances can be solved to optimality. Only for the most complicated instances, with a maximum deviation of 4 and 5 minutes, a small number of instances cannot be solved. However, the average optimality gaps are very small: 4.9 (0.002\%) and 116.2 (0.049\%), respectively. Interestingly, the computation times, iterations and network size do not explode when $\delta^{\max}$ increases. In practice, it is unlikely to have more than 11 potential starting times for every trip. Therefore, this showcases the practicability and scalability of the algorithm. Moreover, we again observe that the initial bound provided by DDD is remarkably close to the final solution. 

To further analyze the behavior of DDD, Figure~\ref{fig:bounds} presents the progress of the average lower and upper bounds over time. We observe that alongside the strong lower bounds, the algorithm is also able to find good upper bounds quickly. Even for the instances with the largest maximum deviation, after ten minutes the gap between lower and upper bound is less than the costs of a single vehicle. 
Relatively speaking, the primal bound is the weaker of the two. While there is hardly any visual improvement in lower bounds over time, the upper bounds do still reduce noticeably over time.  We also observe a tailing-off effect. For example, for $\delta^{\max}$ equal to 2 minutes, the gap is effectively closed after 20 minutes, but it takes over 50 minutes to prove exact optimality.

\begin{table}[]
\caption{Computational results for the 1000M instances for DDD with Long deadheading arcs and Fewer Iterations refinement. For each value of the maximum deviation, the number of optimal solutions, the average objective, absolute gap, computation time, number of iterations, number of nodes and arcs in the final DDD iteration, and the first lower bound are reported.}
\label{tbl:largeI}
\small
\centering
\begin{tabular}{crrrrrrrr}
\hline
$\delta^{\max}$ (min.) & \multicolumn{1}{c}{Opt.} & \multicolumn{1}{c}{Obj.} & \multicolumn{1}{c}{Abs. Gap} & \multicolumn{1}{c}{Time (min.)} & \multicolumn{1}{c}{Iter.} & \multicolumn{1}{c}{Nodes} & \multicolumn{1}{c}{Arcs} & \multicolumn{1}{c}{$1^{\text{st}}$ LB} \\ \hline
1               &  10                  &   249791                     & 0                            & 8.1                             & 5.1                       & 7936                      & 189792                   & 249578                     \\
2               &  10                & 247273                       & 0                            & 16.3                            & 7.3                       & 8073                      & 190827                   & 247246                     \\
3               &  10                  & 243899                       & 0                            & 24.4                            & 10.0                      & 8314                      & 192751                   & 243842                     \\
4               & 9                   & 241203                       & 4.9                          & 35.0                            & 11.3                      & 8749                      & 196525                   & 240929                     \\
5               & 7                   & 238677                        & 116.2                        & 64.7                            & 14.4                      & 9261                      & 201605                   & 238079                     \\ \hline
\end{tabular}
\end{table}

\begin{filecontents}{bounds1.csv}
time,lb,ub
2.5,249682.8,250352.3
2.7,249783.2,250352.2
3.2,249783.2,250292.9
3.5,249783.3,250283.7
3.7,249783.4,250079.0
3.8,249783.4,250059.9
4.0,249783.8,250057.9
4.2,249784.1,250055.4
4.3,249784.5,250055.2
4.5,249784.6,250055.0
4.7,249784.7,250055.0
4.8,249786.0,249942.0
5.3,249787.4,249939.5
5.8,249787.5,249935.5
6.2,249788.5,249933.4
6.5,249788.6,249912.3
7.0,249788.5,249907.7
7.8,249789.1,249907.7
8.5,249789.5,249906.9
8.8,249789.6,249906.9
9.0,249789.6,249802.6
9.7,249789.6,249793.5
14.0,249790.3,249791.5
15.0,249790.7,249790.9
16.5,249790.9,249790.9
\end{filecontents}
\begin{filecontents}{bounds2.csv}
time,lb,ub
1.7,247249.2,248757.2
2.5,247253.5,248397.4
2.9,247257.2,248096.7
3.3,247262.0,248073.2
3.8,247262.6,247961.0
4.6,247263.5,247959.7
5.0,247263.7,247951.5
5.4,247265.3,247850.3
5.8,247265.3,247743.1
6.3,247265.8,247721.3
6.7,247266.2,247718.9
7.1,247266.7,247718.9
7.9,247266.7,247703.7
8.3,247267.2,247702.5
8.8,247267.2,247499.2
9.2,247267.8,247391.5
9.6,247267.9,247391.5
10.8,247268.2,247391.5
11.7,247268.4,247391.5
12.9,247268.9,247386.4
13.3,247269.0,247386.4
13.8,247269.5,247386.4
15.4,247270.2,247386.4
16.7,247270.8,247386.4
17.5,247270.9,247381.9
20.0,247270.9,247379.3
21.3,247270.9,247278.7
21.7,247271.1,247278.7
29.2,247272.2,247275.8
35.4,247272.6,247275.8
42.1,247272.7,247273.0
51.7,247272.9,247272.9
\end{filecontents}
\begin{filecontents}{bounds3.csv}
time,lb,ub
2.9,243872.7,245185.6
3.8,243874.2,244956.1
4.2,243878.3,244808.3
4.6,243880.8,244783.4
5.0,243880.9,244783.4
5.4,243883.6,244674.3
5.8,243883.8,244674.3
6.7,243884.9,244672.0
7.5,243885.3,244671.4
7.9,243885.3,244554.2
8.3,243887.1,244447.5
8.8,243889.6,244343.7
9.6,243889.7,244341.9
10.4,243890.4,244334.0
10.8,243890.9,244334.0
11.3,243891.1,244334.0
11.7,243891.7,244327.6
12.1,243891.7,244230.8
12.9,243891.7,244230.7
13.3,243891.7,244228.3
13.8,243892.6,244227.1
14.2,243893.0,244227.1
14.6,243893.2,244227.1
15.4,243893.4,244219.2
15.8,243893.9,244218.9
17.1,243894.0,244114.1
17.9,243894.0,244112.1
19.2,243894.1,244112.1
19.6,243894.2,244007.9
20.0,243894.4,244007.9
21.3,243894.3,244007.9
22.5,243894.4,244007.7
23.3,243894.5,244007.7
25.0,243894.3,244007.7
27.1,243894.3,243905.1
30.4,243895.5,243905.1
36.7,243896.6,243900.7
42.9,243897.8,243898.6
44.6,243898.0,243898.6
46.7,243898.2,243898.5
60.0,243898.5,243898.5
\end{filecontents}
\begin{filecontents}{bounds4.csv}
time,lb,ub
7.1,241186.7,242033.1
7.9,241187.1,242024.5
8.8,241187.2,242022.9
9.2,241187.9,242001.9
9.6,241188.0,241895.7
10.0,241188.7,241850.2
10.8,241188.8,241849.8
11.3,241189.4,241633.6
12.5,241190.0,241633.1
12.9,241190.2,241629.3
13.8,241190.8,241628.9
15.4,241191.2,241524.7
16.7,241192.3,241522.7
17.1,241192.5,241522.7
17.5,241192.6,241421.2
19.2,241193.3,241421.1
19.6,241193.9,241418.9
20.0,241194.0,241418.9
21.3,241194.3,241418.9
21.7,241194.4,241418.9
23.8,241194.4,241418.8
24.6,241194.4,241415.6
25.0,241194.4,241413.7
26.3,241194.4,241412.3
26.7,241194.5,241412.3
27.9,241195.4,241412.3
32.9,241195.5,241410.2
35.4,241195.6,241306.3
37.1,241195.6,241303.2
38.8,241195.6,241204.2
39.6,241195.8,241204.2
41.3,241196.9,241203.3
43.8,241197.0,241203.3
44.2,241197.0,241203.2
\end{filecontents}
\begin{filecontents}{bounds5.csv}
time,lb,ub
11.7,238455.5,239332.1
14.2,238455.5,239223.5
15.0,238456.0,239209.9
15.8,238456.2,239102.3
16.7,238549.1,239102.3
17.5,238549.5,239091.9
20.8,238549.6,239091.9
22.5,238549.7,239091.9
23.3,238550.1,239090.7
24.2,238550.6,239084.0
25.0,238550.6,238978.3
25.8,238550.7,238947.3
26.7,238551.7,238947.3
28.3,238551.9,238947.3
31.7,238552.4,238947.3
32.5,238552.6,238947.3
33.3,238552.6,238946.2
34.2,238553.0,238944.1
35.8,238553.3,238840.6
36.7,238554.6,238838.5
40.0,238555.1,238838.5
40.8,238555.1,238835.9
41.7,238555.7,238834.6
44.2,238555.9,238834.6
45.0,238556.1,238834.6
49.2,238556.7,238832.4
50.8,238557.5,238829.6
53.3,238557.6,238829.6
59.2,238557.6,238827.1
62.5,238557.5,238827.1
63.3,238557.6,238827.1
70.0,238558.0,238822.7
71.7,238558.2,238822.7
73.3,238558.3,238821.2
74.2,238558.6,238821.2
76.7,238558.8,238821.2
79.2,238558.9,238821.2
80.8,238558.7,238817.9
85.0,238558.7,238714.7
86.7,238558.8,238714.7
87.5,238559.0,238714.7
95.0,238558.9,238713.0
108.3,238559.0,238713.0
109.2,238558.8,238676.7
120.0,238559.6,238676.7
\end{filecontents}

\begin{figure}[ht]
\centering
\begin{subfigure}{0.3\textwidth}
\begin{tikzpicture}[scale=0.55]
    \centering
\begin{axis}[xlabel={Time (min.)},
  ylabel={Costs},every axis plot/.append style={thick}, legend style={at={(0.93,0.93)},anchor=north east}, legend cell align={left},yticklabel style={scaled ticks=false,
                                 /pgf/number format/fixed,
                                 /pgf/number format/precision=0}]
]
\addplot table [x=time, y=ub, col sep=comma] {bounds1.csv};
\addlegendentry{UB}
\addplot+[mark=square*,mark options={fill=red}] table [x=time, y=lb, col sep=comma] {bounds1.csv};
\addlegendentry{LB}
\end{axis}
\end{tikzpicture}
\caption{$\delta^{\max}=1$ min.}
\label{fig:bounds1}
\end{subfigure}
\hspace{10pt}
\begin{subfigure}{0.3\textwidth}
\begin{tikzpicture}[scale=0.55]
    \centering
\begin{axis}[xlabel={Time (min.)},
  ylabel={Costs},every axis plot/.append style={thick}, legend style={at={(0.93,0.93)},anchor=north east}, legend cell align={left},yticklabel style={scaled ticks=false,
                                 /pgf/number format/fixed,
                                 /pgf/number format/precision=0}]
]
\addplot table [x=time, y=ub, col sep=comma] {bounds2.csv};
\addlegendentry{UB}
\addplot+[mark=square*,mark options={fill=red}] table [x=time, y=lb, col sep=comma] {bounds2.csv};
\addlegendentry{LB}
\end{axis}
\end{tikzpicture}
\caption{$\delta^{\max}=2$ min.}
\label{fig:bounds2}
\end{subfigure}
\hspace{10pt}
\begin{subfigure}{0.3\textwidth}
\begin{tikzpicture}[scale=0.55]
    \centering
\begin{axis}[xlabel={Time (min.)},
  ylabel={Costs},every axis plot/.append style={thick}, legend style={at={(0.93,0.93)},anchor=north east}, legend cell align={left},yticklabel style={scaled ticks=false,
                                 /pgf/number format/fixed,
                                 /pgf/number format/precision=0}]
]
\addplot table [x=time, y=ub, col sep=comma] {bounds3.csv};
\addlegendentry{UB}
\addplot+[mark=square*,mark options={fill=red}] table [x=time, y=lb, col sep=comma] {bounds3.csv};
\addlegendentry{LB}
\end{axis}
\end{tikzpicture}
\caption{$\delta^{\max}=3$ min.}
\label{fig:bounds3}
\end{subfigure}
\begin{subfigure}{0.3\textwidth}
\vspace{20pt}
\begin{tikzpicture}[scale=0.55]
    \centering
\begin{axis}[xlabel={Time (min.)},
  ylabel={Costs},every axis plot/.append style={thick}, legend style={at={(0.93,0.93)},anchor=north east}, legend cell align={left},yticklabel style={scaled ticks=false,
                                 /pgf/number format/fixed,
                                 /pgf/number format/precision=0}]
]
\addplot table [x=time, y=ub, col sep=comma] {bounds4.csv};
\addlegendentry{UB}
\addplot+[mark=square*,mark options={fill=red}] table [x=time, y=lb, col sep=comma] {bounds4.csv};
\addlegendentry{LB}
\end{axis}
\end{tikzpicture}
\caption{$\delta^{\max}=4$ min.}
\label{fig:bounds4}
\end{subfigure}
\hspace{20pt}
\begin{subfigure}{0.3\textwidth}
\vspace{20pt}
\begin{tikzpicture}[scale=0.55]
    \centering
\begin{axis}[xlabel={Time (min.)},
  ylabel={Costs},every axis plot/.append style={thick}, legend style={at={(0.93,0.93)},anchor=north east}, legend cell align={left},yticklabel style={scaled ticks=false,
                                 /pgf/number format/fixed,
                                 /pgf/number format/precision=0}]
]
\addplot table [x=time, y=ub, col sep=comma] {bounds5.csv};
\addlegendentry{UB}
\addplot+[mark=square*,mark options={fill=red}] table [x=time, y=lb, col sep=comma] {bounds5.csv};
\addlegendentry{LB}
\end{axis}
\end{tikzpicture}
\caption{$\delta^{\max}=5$ min.}
\label{fig:bounds5}
\end{subfigure}
\caption{Progress of the average lower and upper bound over time for the 1000M instance.}
\label{fig:bounds}
\end{figure}
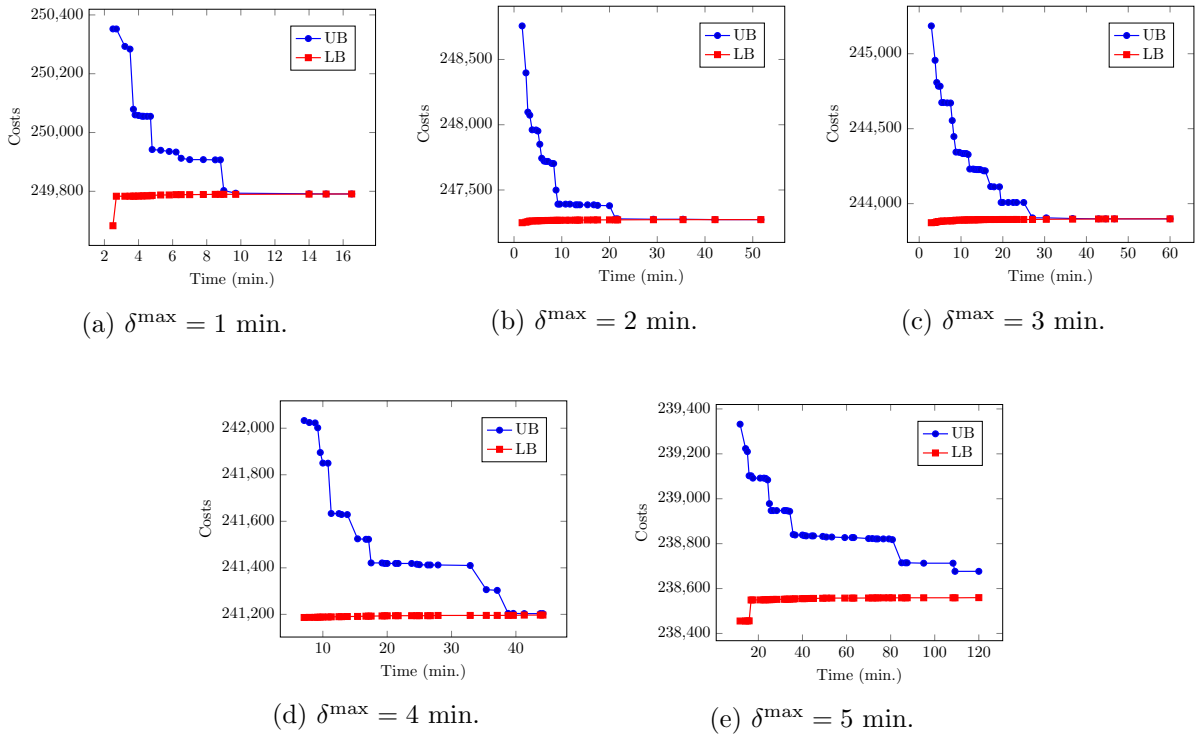

\subsection{Benefits of Trip Shifting} 

In a final experiment, we exploit the strength of the proposed DDD algorithm to provide insights in the benefits of trip shifting, and the trade-off between costs and timetable alterations. We consider all instances of types 500M, 750M and 1000M, and post-process the solutions to minimize unnecessary deviations from the timetable, according to the methods described in Section~\ref{ss:pp}. 

Table~\ref{tbl:gains} presents for all instances the average costs, number of vehicles and deadheading time, and how these change relative to the case where no timetable deviations are allowed. In addition, it reports the average deviation per trip that results from selecting the best duty decomposition, the worst duty decomposition and a randomly selected duty decomposition. Figure~\ref{fig:gains} depicts the average deviation per trip necessary to achieve a certain saving in costs. 

The main observation is that the benefits of trip shifting are considerable. The reduction in costs ranges from about 1\% for a maximum deviation of 1 minute, to about 5\% for a maximum deviation of 5 minutes. These gains also grow in the instance size, likely because having more trips presents more opportunities for combining trips in an efficient way by shifting departure times. In addition,  although the largest share in cost savings come from reducing the number of vehicles, there is also a significant decrease in deadheading time.

\begin{table}[]
\caption{The average costs, number of vehicles, deadheading time, and the average deviation per trip in the best, worst and a randomly selected duty decomposition, for optimal solutions with increasing values of the maximum allowed deviation. }
\label{tbl:gains}
\resizebox{\columnwidth}{!}{%
\begin{tabular}{ccrlrlrlrrr}
\hline
                       & \multicolumn{1}{c}{}          &                          &                                &                              &                                &                             &                                & \multicolumn{3}{c}{Average Dev. (s)}                                           \\ \cline{9-11} 
Instance               & $\delta^{\max}$ (min.) & \multicolumn{1}{c}{Costs} & \multicolumn{1}{c}{(\% diff.)} & \multicolumn{1}{c}{Vehicles} & \multicolumn{1}{c}{(\% diff.)} & \multicolumn{1}{c}{DH-ing} & \multicolumn{1}{c}{(\% diff.)} & \multicolumn{1}{c}{Best} & \multicolumn{1}{c}{Random} & \multicolumn{1}{c}{Worst} \\ \hline
                       & 0                             & 132139                   &                                & 124.1                        &                                & 8039                        &                                & \multicolumn{1}{c}{-}    & \multicolumn{1}{c}{-}      & \multicolumn{1}{c}{-}     \\
                       & 1                             & 131046                   & -0.8                           & 123.1                        & -0.8                           & 7946                        & -1.1                           & 3.5                      & 3.8                        & 3.9                       \\
\multirow{2}{*}{500M}  & 2                             & 129980                   & -1.6                           & 122.1                        & -1.6                           & 7880                        & -2.0                           & 10.8                     & 11.7                       & 12.5                      \\
                       & 3                             & 128616                   & -2.7                           & 120.8                        & -2.7                           & 7816                        & -2.8                           & 22.9                     & 24.0                       & 25.5                      \\
                       & 4                             & 127122                   & -3.8                           & 119.3                        & -3.9                           & 7822                        & -2.7                           & 42.0                     & 44.8                       & 47.2                      \\
                       & 5                             & 126226                   & -4.5                           & 118.5                        & -4.5                           & 7726                        & -3.9                           & 60.0                     & 62.6                       & 65.8                      \\ \hline
                       & 0                             & 195787                   &                                & 184.7                        &                                & 11087                       &                                & -                        & -                          & -                         \\
                       & 1                             & 193603                   & -1.1                           & 182.6                        & -1.1                           & 11003                       & -0.8                           & 3.7                      & 4.0                        & 4.3                       \\
\multirow{2}{*}{750M}  & 2                             & 191118                   & -2.4                           & 180.2                        & -2.4                           & 10918                       & -1.5                           & 11.8                     & 12.6                       & 13.6                      \\
                       & 3                             & 189619                   & -3.2                           & 178.8                        & -3.2                           & 10819                       & -2.4                           & 23.6                     & 25.1                       & 26.7                      \\
                       & 4                             & 187548                   & -4.2                           & 176.8                        & -4.3                           & 10748                       & -3.1                           & 41.4                     & 43.7                       & 46.6                      \\
                       & 5                             & 185377                   & -5.3                           & 174.7                        & -5.4                           & 10677                       & -3.7                           & 62.2                     & 65.6                       & 69.6                      \\ \hline
                       & 0                             & 252975                   &                                & 238.8                        &                                & 14175                       &                                & -                        & -                          & -                         \\
                       & 1                             & 249791                   & -1.3                           & 235.7                        & -1.3                           & 14091                       & -0.6                           & 4.4                      & 4.7                        & 5.2                       \\
\multirow{2}{*}{1000M} & 2                             & 247273                   & -2.3                           & 233.3                        & -2.3                           & 13973                       & -1.4                           & 13.9                     & 15.1                       & 16.1                      \\
                       & 3                             & 243899                   & -3.6                           & 230.0                        & -3.7                           & 13899                       & -1.9                           & 28.4                     & 30.1                       & 32.2                      \\
                       & 4                             & 241203                   & -4.7                           & 227.4                        & -4.8                           & 13803                       & -2.6                           & 45.4                     & 47.9                       & 51.2                      \\
                       & 5                             & 238688                   & -5.6                           & 225.0                        & -5.8                           & 13688                       & -3.4                           & 66.2                     & 69.6                       & 72.8                      \\ \hline
\end{tabular}
}
\end{table}

\begin{filecontents}{gains.csv}
Gains500,Dev500,Gains750,Dev750,Gains1000,Dev1000
0.83,3.52,1.12,3.66,1.26,4.39
1.63,10.78,2.38,11.79,2.25,13.92
2.67,22.92,3.15,23.62,3.59,28.43
3.80,42.02,4.21,41.38,4.65,45.42
4.47,59.96,5.32,62.19,5.65,66.25
\end{filecontents}

\begin{figure}[ht]
\centering
\begin{tikzpicture}[scale=0.95]
    \centering
\begin{axis}[xlabel={Average Deviation (s)},
  ylabel={Cost Reduction (\%)},every axis plot/.append style={thick}, legend style={at={(0.05,0.95)},anchor=north west},legend cell align={left}]
\addplot table [x=Dev1000, y=Gains1000, col sep=comma] {gains.csv};
\addlegendentry{1000M}
\addplot+[mark=square*,mark options={fill=red}] table [x=Dev750, y=Gains750, col sep=comma] {gains.csv};
\addlegendentry{750M}
\addplot+[YellowGreen,mark=triangle*,mark options={fill=YellowGreen}] table [x=Dev500, y=Gains500, col sep=comma] {gains.csv};
\addlegendentry{500M}
\end{axis}
\end{tikzpicture}
\caption{Trade-off between reduction in costs and the average deviation per trip with using best duty decomposition.}
\label{fig:gains}
\end{figure}
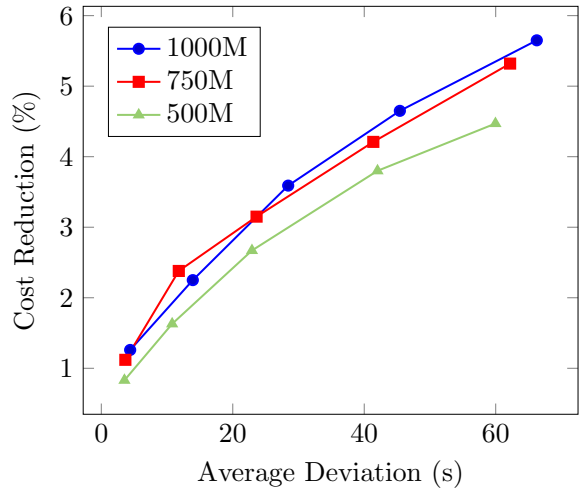

Another finding from both the table and figure is that the average timetable alterations per trip necessary to achieve these reductions in costs are relatively small. For example, for $\delta^{\max}$ equal to 1 minute, the average deviation ranges from about 3 to 5 seconds, which implies that up to one of every twelve trips is shifted by 1 minute. The average deviation does increase in $\delta^{\max}$ and in the instance size, but there we of course also observe larger cost savings. Even if the maximum deviation is 5 minutes, the average deviation is closer to 1 minute. Finally, we find that optimizing the duty decomposition as a post-processing step has clear benefits: it typically drives down the required timetable alterations by over 5\% compared to choosing a random decomposition, and by over 10\% compared to the worst decomposition. 

\section{Conclusion}
\label{s:conclusion}
In this paper, we have presented a tailored Dynamic Discretization Discovery algorithm for the Multi-Depot Vehicle Scheduling Problem with Trip Shifting. Inspired by \cite{boland2017continuous}, this iterative refinement method is able to find an optimal solution to the MDVSP-TS without explicitly modeling all possible trip shifts. Instead, it iteratively solves an integer program on a partially time-expanded network and extends the network only where it causes infeasibility. The core idea behind DDD is that solving the problem on a relaxed network multiple times yields an optimal solution much quicker than through using the fully time-expanded network only once.

Our computational study has shown that DDD with either Medium or Long deadheading arcs indeed outperforms the explicit modeling approach by a wide margin. While the size of the fully time-expanded network quickly grows to a point where the model is intractable when the maximum allowed deviation increases, DDD is able to solve large instances even for large deviations. The performance of the algorithm is relatively insensitive to the choice of refinement strategy, but in most cases Fewer Iterations, targeted at limiting the number of DDD iterations by adding time points aggressively, was able to prove optimality the quickest. Besides, the algorithm provides strong upper bounds already in the early stages of the algorithm, whereas the lower bound is close to the optimal objective value from the first iteration onward. Combined with the observation that the problem becomes increasingly difficult to solve when the iterations progress, it may suffice to terminate the algorithm after a few iterations to obtain a good solution relatively quickly.



All in all, DDD has proven to be a strong addition to the toolbox for solving the MDVSP-TS. Interesting ideas for future research include extending the application of DDD to electric vehicle or integrated vehicle and crew scheduling problems.

\bibliographystyle{apacite}
\bibliography{mybib}

\end{document}